\newtheorem{theorem}{Theorem}
\newtheorem{lemma}{Lemma}
\newtheorem{remark}{Remark}
\newtheorem{prop}{Proposition}
\def\beq{ \begin{equation} }
\def\eeq{ \end{equation} }
\def\mn{\medskip\noindent}
\def\ep{\epsilon}
\def\square{\vcenter{\vbox{\hrule height .4pt
  \hbox{\vrule width .4pt height 5pt \kern 5pt
        \vrule width .4pt} \hrule height .4pt}}}
\def\TT{\mathbb{T}}
\def\ZZ{\mathbb{Z}}
\def\FF{\mathcal{F}}
\def\LL{\mathcal{L}}
\def\varep{\varepsilon}
\def\hh{\hspace{1ex}}
\def\ep{\varepsilon}
\def\GW{\textbf{GW}}
\def\hh{\hspace{1ex}}
\definecolor{darkblue}{rgb}{0,0,0.6}
\definecolor{darkgreen}{rgb}{0,0.7,0}
\begin{document}

\title{Exponential growth and continuous phase transitions for the contact process on trees}
\author{Xiangying Huang}

\date{\today}	

\maketitle

\begin{abstract}
We study the supercritical contact process on Galton-Watson trees and periodic trees. We prove that if the contact process survives weakly then  
it dominates a supercritical Crump-Mode-Jagers branching process. Hence the number of infected sites grows exponentially fast. As a consequence we conclude that the contact process dies out at the critical value $\lambda_1$ for weak survival, and the survival probability $p(\lambda)$ is continuous with respect to the infection rate $\lambda$. Applying this fact, we show the contact process on a general periodic tree experiences two phase transitions in the sense that $\lambda_1<\lambda_2$, which confirms a conjecture of Stacey's \cite{Stacey}. We also prove that if the contact process survives strongly at $\lambda$ then it survives strongly at a $\lambda'<\lambda$, which implies that the process does not survive strongly at the critical value $\lambda_2$ for strong survival.
\end{abstract}

\section{Introduction}
Harris \cite{Harris} introduced the contact process on $\ZZ^d$ in 1974, which has been extensively studied since then. The contact process on a graph is usually viewed as a model that describes the spread of an infection. Vertices of the graph represent individuals and the states 0 and 1 indicate that an individual is healthy or infected. The contact process can be defined on any graph as follows: infected sites become healthy at rate 1, while healthy sites become infected at rate $\lambda$ times the number of infected neighbors. 

Pemantle \cite{Pemantle} began the study of the contact process on trees and found that there exist two critical values 
\begin{align*}
\lambda_1&=\inf\{ \lambda: P(\xi^0_t\neq \varnothing \text{ for all }t)>0\}\\
\lambda_2&=\inf \{\lambda: \liminf_{t\to\infty}P(0\in \xi^0_t)>0\},
\end{align*}
where $\xi^0_t$ denotes the contact process on the tree starting from only the root infected. The contact process is said to survive \textit{weakly} if the process survives but the root $0$ is infected for finitely many times almost surely, and survive \textit{strongly} if the root $0$ is infected for infinitely many times with positive probability. It is natural to guess that the contact process dies out at $\lambda_1$ and does not survive strongly at $\lambda_2$. This is true for the contact process on a $d$-regular tree $\TT^d$ where each vertex has degree $d+1$. (See Liggett \cite{Liggett} for most of the known results on regular trees.) However, proofs on regular trees rely heavily on translation invariance, and special functions such as $w_\rho(\xi_t)=\sum_{x\in\xi_t}\rho^{\ell(x)}$, where $\rho$ is some positive constant and $\ell(x)$ is a function from $\TT^d$ to $\ZZ$ so that for each $x\in \TT^d$, $\ell(y)=\ell(x)-1$ for exactly one neighbor $y$ of $x$ and $\ell(y)=\ell(x)+1$ for the other $d$ neighbors $y$ of $x$. The expectation of $w_\rho(\xi_t)$ satisfies a submultiplicative relation
$$Ew_\rho(\xi_{t+s})\leq E w_\rho(\xi_t)Ew_\rho(\xi_s)$$
which implies $Ew_\rho(\xi_t)\geq [\phi(\rho)]^t$ where $\phi(\rho)=\lim_{t\to\infty} [E w_\rho(\xi_t)]^{1/t}$. Properties of $\phi(\rho)$ help us obtain a lot of detailed information about the behavior of the contact process on regular trees.

\subsection{Galton-Watson trees}
Turning to Galton-Watson trees, the analysis of the contact process becomes more complicated because of the spatial heterogeneity and randomness in the tree structure. Throughout the discussion we consider the contact process on Galton-Watson trees with an offspring distribution $D$ that is a random variable on $\mathbb{N}$ that satisfies 
\begin{equation}\label{offdist}
P(D\geq 1)=1 \text{ and } ED>1.
\end{equation}

A Galton-Watson tree is said to have a \textit{subexponential} offspring distribution if its offspring distribution $D$ satisfies 
$$\limsup_{k\to\infty}  \left(\log P(D=k)\right)/k=0.$$
In this case it is proved in Huang and Durrett \cite{HD18} that 
\begin{theorem} \label{subexp}
If the offspring distribution $D$ for a Galton-Watson tree is subexponential and has mean $\mu>1$ then $\lambda_1=\lambda_2=0$.
\end{theorem}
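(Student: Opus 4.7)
The plan is to prove that for every $\lambda>0$ the contact process on the GW tree survives strongly with positive probability, forcing $\lambda_2=\lambda_1=0$. The engine is a \emph{star lemma}: on the star graph with center $v$ and $k$ leaves, started from $v$ infected, there exist constants $c=c(\lambda)>0$ and $q=q(\lambda)>0$ such that for $k$ sufficiently large, $v$ is infected on a positive fraction of every unit interval up to time $\exp(ck)$ with probability at least $q$. I would prove this by the Pemantle quasi-equilibrium picture: while $v$ is on, the number of infected leaves grows until it stabilizes near a fraction $\lambda/(\lambda+1)$ of $k$, and from then on $v$ is reinfected at rate $\Omega(\lambda k)$ whenever it recovers, so a standard birth--death large deviation estimate supplies an exponentially long lifetime.

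Next, I would use the subexponential hypothesis to locate a nested sequence of such stars inside the tree. The assumption $\limsup_{k\to\infty}(\log P(D=k))/k=0$ gives, for any $\ep>0$, $P(D\ge k)\ge e^{-\ep k}$ along an infinite sequence of $k$'s. Since the GW tree has of order $\mu^n$ vertices at depth $n$, one can choose $n\le K(\ep,\mu)\,k$ so that some vertex at depth $n$ has degree $\ge k$ with probability close to $1$. Iterating this selection inside subtrees produces, on a positive-probability event, vertices $v_0,v_1,v_2,\dots$ with $v_{i+1}$ a descendant of $v_i$ at distance $n_i$, $\deg(v_i)\ge k_i$, and $n_i\le K k_i$ with $k_i\to\infty$. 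Because the contact process with any $\lambda>0$ traverses a fixed length-$n$ path in time $O(n)$ with positive probability, and $\exp(ck_i)$ dominates $n_i$ once $k_i$ is moderately large, the infection at $v_i$ survives long enough to reach $v_{i+1}$ and ignite its star with probability at least $q$. A Borel--Cantelli argument on these hops gives an infinite successful chain with positive probability, so the process survives. Strong survival then follows by applying the star lemma at a high-degree vertex $v_*$ close to the root: a positive fraction of the ``on'' time at $v_*$ can be back-propagated along the short path from $v_*$ to $0$ with probability bounded below, so $0$ is reinfected at a positive asymptotic rate.

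The main obstacle is the star lemma itself: the constants $c(\lambda)$ and $q(\lambda)$ must be uniform in $k$, since otherwise the compounded success probability across infinitely many hops collapses. This calls for quantitative control of both the short initial transient (reaching the quasi-equilibrium from the single-seed start) and the exponentially long escape time, with bounds independent of $k$. A secondary subtlety is the hop geometry: the ratio $n_i/k_i$ must be bounded by a constant depending only on $\ep$ and $\mu$, and the freedom to pick $\ep$ small must not shrink $c(\lambda)$, so one wants a clean separation between the star-lifetime estimate and the degree-location estimate.
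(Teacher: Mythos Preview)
The paper does not actually prove this theorem; it is quoted from \cite{HD18}, and the introduction supplies only a heuristic (the computation culminating in \eqref{subpush}). Your sketch follows exactly that heuristic and the strategy of \cite{HD18}: a star lemma giving survival time $\exp(ck)$ on a degree-$k$ star, the subexponential tail to locate a degree-$k$ vertex within distance $O(\ep k/\log\mu)$ for arbitrarily small $\ep$, and an iterated hop between nested stars, followed by back-propagation to the root for strong survival. So the approach is the intended one.

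One point needs sharpening. You write that each hop succeeds ``with probability at least $q$'' and then appeal to Borel--Cantelli. If $q$ were a fixed constant strictly below $1$, an \emph{unbroken} infinite chain of successes would have probability zero, and neither direction of Borel--Cantelli repairs this. What actually makes the argument work---and what the paper's display \eqref{subpush} makes explicit---is that the failure probability $p_i$ of the $i$-th hop is doubly exponentially small: the star at $v_i$ survives for time $\exp(ck_i)$ while a single push along the length-$n_i$ path costs only $(\lambda/(1+\lambda))^{n_i}$, and since $n_i\le K(\ep)k_i$ with $\ep$ chosen so that $c>K(\ep)\log\frac{1+\lambda}{\lambda}$, one gets $p_i\le\exp\bigl(-e^{c'k_i}\bigr)$ for some $c'>0$. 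Then $\sum_i p_i<\infty$ and $\prod_i(1-p_i)>0$, which is what delivers the infinite chain with positive probability. Your final paragraph shows you are aware of exactly this tension, so the issue is phrasing rather than a genuine gap; just replace the fixed-$q$/Borel--Cantelli formulation with the summable-failure statement.
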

\mn
Clearly when the offspring distribution is subexponential the contact process dies out at both critical values. To give some intuition why $\lambda_2=0$ we discuss briefly the case where the offspring distribution satisfies $P(D\geq k)=\exp(-k^{\alpha})$ for all $k\geq 0$ where $\alpha<1$. Within distance $k$ of the root there are roughly $\mu^k$ vertices where $\mu$ is the mean offspring number. Hence the vertex with the largest degree within distance $k$ has degree $\approx (k\log \mu)^{1/\alpha}$. A vertex with degree $n$ along with its neighbors is called a star graph with degree $n$. The contact process on a star graph with degree $n$ can survive for time $\exp( C\lambda^2n)$ for some $C>0$ when $n$ is sufficiently large (see \cite{CD09}). Here and in what follows $C$ is a positive constant whose value can change.
While surviving on a star graph within distance $k$ of the root, the contact process can try to push the infection back to the root with success probability $(\lambda/(1+\lambda))^k$ for each attempt. 
Since the process can survive for time  $\exp(C\lambda^2 (k\log \mu)^{1/\alpha})$ on a star graph with degree $(k\log \mu)^{1/\alpha}$, the probability that it fails to push the infection back to the root is (while omitting some details)
\beq\label{subpush}
\leq \left(1- (\lambda/(1+\lambda))^k\right)^{\exp(C\lambda^2(k\log \mu)^{1/\alpha})}\leq \exp\left( -\left(\frac{\lambda}{1+\lambda}\right)^k  \exp(C\lambda^2(k\log \mu)^{1/\alpha})\right).
\eeq
Since the first term in the exponent decays exponentially while the second grows superexponentially, the failure probability goes to 0 as $k\to\infty$ for any positive values of $\lambda$. That is, the contact process can successfully push the infection back to the root and thus survive strongly for any $\lambda>0$.

It turns out that Theorem \ref{subexp} is sharp. The critical value is positive on Galton-Watson trees with tails thinner than the subexponential distribution. Recently Shankar Bhamidi, Danny Nam, Oanh Nguyen and Allan Sly \cite{Sly19} proved that 
\begin{theorem}
Consider the contact process on the Galton-Watson tree with offspring distribution
$D$. If $E(\exp(cD))<\infty$
for some $c > 0$, then $\lambda_1>0$.
\end{theorem}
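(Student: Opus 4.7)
The goal is to show $E|\xi_t^0| \to 0$ as $t \to \infty$ for $\lambda$ sufficiently small, which yields $P(\xi_t^0 \neq \varnothing) \to 0$ and hence $\lambda_1 > 0$. My approach is to compare the contact process to a branching random walk on the tree, count walks, and then carefully control the contribution of high-degree vertices using the exponential tail of $D$.

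First I would dominate the contact process by the branching random walk (BRW) in which each particle, during an exponential-$(1)$ lifetime, gives birth at each neighbor at rate $\lambda$. Using the Harris graphical representation and integrating out the timing along a length-$n$ walk, the probability that any particular walk is an open infection path is at most $\lambda^n$, so by summing over walks one obtains the bound
\beq
E[\#\{v : v \text{ ever infected}\}] \;\leq\; \sum_{n \geq 0} \lambda^n\, W_n(0),
\eeq
where $W_n(0)$ is the number of length-$n$ walks from the root. If this series is almost surely finite, then only finitely many vertices are ever infected and the process dies out. The exponential moment assumption $E[e^{cD}]<\infty$ enters through the tail bound $P(D \geq k) \leq C e^{-ck}$ and the factorial moment bound $E[D^k] \leq C\, k!\, M^k$ for some $M = M(c)$, which together control both how dense the tree is and how heavy the degrees encountered along a walk can be.

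The main obstacle is that this BRW bound alone is not tight enough. Back-and-forth walks concentrated at a single high-degree vertex $v$ already contribute $W_{2k}(0) \geq d(v)^k$, and since the maximum degree on a GW tree with unbounded offspring support is a.s.\ infinite, the series $\sum_n \lambda^n W_n(0)$ diverges a.s.\ for every $\lambda > 0$. BRW ignores the site-exclusion constraint that the contact process enforces, and exploiting this constraint is unavoidable. To do so I would combine the walk bound with two further ingredients. First, the star-graph estimate of \cite{CD09}: the contact process on a star of degree $d$ survives locally for time at most $\exp(C\lambda^2 d)$. Second, an infection at a vertex of depth $n$ must traverse the unique path of length $n$ to reach the root, which requires a chain of transmissions of probability at most $e^{-n|\log \lambda|(1+o(1))}$ per attempt. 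Under $P(D \geq k) \leq C e^{-ck}$ the maximum degree within the first $n$ generations is $O(n/c)$, so for $\lambda$ small enough that $C\lambda^2 \cdot O(n/c) < n|\log \lambda|$, no local star can sustain infection long enough to push it back to the root.

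Combining this with the fact that the purely-forward transmission process is a branching process of offspring mean $\mu\lambda/(1+\lambda)$, which is subcritical for $\lambda < 1/(\mu-1)$, would then give extinction. The technical heart of the argument, and what I expect to be the hardest step, is a renewal-style bookkeeping that decomposes a potential infinite infection chain into alternating forward transmissions and local excursions through high-degree vertices, and bounds the total probability uniformly for $\lambda$ below a threshold depending only on $c$ and the law of $D$. Once this decomposition is available, the two estimates above — subcritical forward spread and sub-threshold star pumping — can be combined via a first-moment bound on the forest of first-infection edges to conclude $\sum_v P(v \text{ ever infected}) < \infty$ almost surely on the GW tree, and therefore $\lambda_1 > 0$.
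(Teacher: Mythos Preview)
This theorem is not proved in the present paper: it is stated as a result of Bhamidi, Nam, Nguyen and Sly \cite{Sly19}, and the surrounding text only offers a heuristic paragraph explaining why an exponential tail should behave differently from a subexponential one. There is thus no ``paper's own proof'' to compare your proposal against.

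On the merits of the proposal itself: what you have written is a sketch, and you explicitly leave open the step you call ``the technical heart of the argument'' --- the renewal-style decomposition of an infinite infection chain into forward transmissions and local excursions. That is precisely where the real work lies, and it is not routine. Your heuristic balancing of the star survival time $\exp(C\lambda^2 d)$ against the push-back cost $(\lambda/(1+\lambda))^n$ matches the intuition the paper gives informally after the theorem statement, but several issues stand between that heuristic and a proof: the maximal degree within the first $n$ generations is $O(n)$ only in a distributional sense, not deterministically for every $n$; the infection may migrate among several moderately large stars at different depths rather than being anchored at a single one; and you have already correctly observed that the naive first-moment BRW count diverges, so the promised ``first-moment bound on the forest of first-infection edges'' cannot simply be that count and requires a genuinely new structure that you have not supplied. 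Until the decomposition is actually constructed and shown to control these effects, the proposal does not constitute a proof.
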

\mn
An offspring distribution with this property is said to have an \textit{exponential tail}, which is thinner than the tail of the subexponential distribution. For example, suppose $P(D\geq k)=\exp(-ck)$ for $k\geq 0$ and some $c>0$. Following the previous heuristics the vertex with the largest degree within distance $k$ of the root has degree $\approx (k\log \mu)/c$. Again the contact process can survival for time $\exp(C\lambda^2 k)$ on a star graph with degree $(k\log \mu)/c$ for some $C>0$, and can push the infection back to the root with failure probability 
\beq\label{exppush}
\left(1- (\lambda/(1+\lambda))^k\right)^{\exp(C\lambda^2k)}\approx \exp\left( -\left(\frac{\lambda}{1+\lambda}\right)^k  \exp(C\lambda^2k)\right)
\eeq
when $\lambda$ is sufficiently small. As $\log(\lambda/(1+\lambda))+C\lambda^2<0$ when $\lambda$ is small, (\ref{exppush}) does not go to 0 when $k$ goes to infinity. This provides some intuition why $\lambda_1$ is strictly positive when the offspring distribution has an exponential tail. Comparing (\ref{subpush}) and (\ref{exppush}), we can see the difference between the subexpnential distribution and the exponential-tail distribution is that in the first case the largest degree within distance $k$ ( i.e., $(k\log \mu)^{1/\alpha}$) grows superlinearly in $k$ and overwhelms the success probability for pushing the infection back to the root. 

Since $\lambda_2\geq \lambda_1>0$ on Galton-Watson trees with exponential tails, it is now interesting to (i) determine if $\lambda_1<\lambda_2$ and (ii) understand the behavior of the contact process at the critical values $\lambda_1$ and $\lambda_2$. We have not been able to solve the first difficult problem but we have solved the second.

The first step is to construct a supercritical Crump-Mode-Jagers (CMJ) branching process that is dominated by the contact process. As a consequence, we prove that the number of infected sites in the contact process grows exponentially fast on the event of survival. Let $\xi^0_t$ be the contact process on $\GW(D)$ with only the root infected initially. 
\begin{theorem}\label{main}
Suppose the offspring distribution $D$  satisfies (\ref{offdist}). When $\lambda>\lambda_1(\GW(D))$, there is a positive constant $c$ so that
\begin{equation}\label{expg}
\liminf_{t\to\infty} |\xi^0_t|/e^{c t} >0 \quad \text{ $P_\lambda$-a.s.  on }\Omega_\infty \equiv\{\xi^0_t\neq \varnothing \hh \forall t\geq 0\}.
\end{equation}
\end{theorem}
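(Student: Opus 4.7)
Following the hint in the introduction, my plan is to embed a supercritical Crump--Mode--Jagers (CMJ) branching process $\mathcal B$ inside the contact process, dominated pathwise by $|\xi^0_t|$, and then invoke the standard exponential growth theorem for supercritical CMJ.

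The first key step is to manufacture a ``seed event'' with reproduction mean strictly above $1$. Since $\lambda>\lambda_1$, the survival probability $\theta(\lambda):=P_\lambda(\Omega_\infty)$ is strictly positive. I would argue that for any integer $M$ one can choose a depth $R$ and a time $T$ so that, with probability $p$ bounded below by $\theta(\lambda)/2$, starting from the root infected the contact process has by time $T$ infected at least $M$ distinct vertices at depth exactly $R$ without ever leaving the ball $B(0,R)$. Two ingredients drive this: up to any fixed $T$ the infected set stays in a finite ball (finite propagation speed), while on $\Omega_\infty$ the infection must eventually leave any finite ball (otherwise it would be effectively confined to a finite graph and a.s.\ die out). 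Choosing $M$ so large that $Mp>1$ then gives the desired supercritical seed.

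The second step is the embedding itself. By the branching property of $\GW(D)$, conditional on the good event the $M$ subtrees rooted at the identified depth-$R$ vertices are i.i.d.\ copies of $\GW(D)$ and untouched by the contact-process events inside $B(0,R)$. Iterating the construction independently below each offspring gives an embedded branching tree $\mathcal B$ with deterministic generation time $T$ and reproduction mean $Mp>1$; each particle corresponds to an infected vertex of the contact process. The standard CMJ (or supercritical BGW) exponential growth theorem yields $|\mathcal B_t|\ge e^{(\alpha-o(1))t}$ a.s.\ on $\{\mathcal B \text{ survives}\}$ with $\alpha=T^{-1}\log(Mp)>0$, which gives $\liminf_{t\to\infty} |\xi^0_t|/e^{ct}>0$ a.s.\ on that event for any $0<c<\alpha$.

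Finally, I would upgrade ``a.s.\ on $\{\mathcal B \text{ survives}\}$'' to ``a.s.\ on $\Omega_\infty$.'' On $\Omega_\infty$ the infection visits arbitrarily large depths and the subtree hanging below each newly reached vertex is fresh, so one can retry the CMJ embedding at an infinite sequence of first-arrival vertices; restricting to vertices in pairwise disjoint subtrees makes the attempts genuinely independent with success probability bounded below by the CMJ survival probability $1-q>0$, and Borel--Cantelli forces at least one to succeed. I expect the main obstacle to be Step 1: forcing $Mp>1$ needs a quantitative growth-in-a-ball statement, and the argument must be written carefully so that the $M$ candidate descendants really can be placed simultaneously at a common depth with simultaneously untouched subtrees. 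A secondary bookkeeping subtlety is arranging enough disjoint branches for the retrials, which on $\GW(D)$ is possible because $ED>1$ guarantees genuine branching.
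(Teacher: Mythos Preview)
Your overall architecture---embed a supercritical CMJ process and then upgrade via independent retrials on fresh subtrees---is exactly the paper's strategy (its Lemmas~1 and~2 plus the last paragraph of Lemma~1). The difference, and the real gap, is the seed event in your Step~1.

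You ask for $M$ vertices \emph{at a common depth $R$}, infected by time $T$, with the process never having left $B(0,R)$. Neither of the two ingredients you cite delivers this. ``The infection must eventually leave any finite ball'' tells you that for each $R$ at least \emph{one} boundary vertex is hit, not $M$; and there is no pigeonhole that forces many hits at a single level (on $\Omega_\infty$ the ever-infected set $A_\infty$ could in principle be a ray with finite decorations, e.g.\ when $\lambda>\lambda_c(\mathbb Z)$, giving $O(1)$ vertices per level). The extra clause ``without ever leaving $B(0,R)$'' only makes the event rarer, and it is unnecessary anyway: a subtree is fresh at the \emph{first} infection time of its root regardless of what the process did elsewhere, so you can drop the confinement and start each offspring at its own first-hit time (your CMJ then has random birth times $\le T$, not the deterministic generation time $T$ you wrote, but that is harmless).

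The paper sidesteps the common-depth issue entirely. It works with the \emph{infected frontier} $\xi^0_t\cap F(A_t)$---currently infected vertices that still have an unexplored child---with no depth restriction, and sets $\tau_k=\inf\{t:|\xi^0_t\cap F(A_t)|\ge k\}$. The substantive lemma (their Lemma~2) is that $\tau_k<\infty$ a.s.\ on $\Omega_\infty$ for every $k$. The proof is a renewal argument: whenever the frontier is nonempty one attempts $2k{+}1$ consecutive births along fresh vertices of offspring number $\ge 2$ before any death, succeeding with probability at least $\big(\tfrac{\lambda}{1+\lambda}P(D\ge2)\big)^{2k+1}$; on $\Omega_\infty$ one gets infinitely many independent attempts because $|A_t|\to\infty$ forces the frontier to be nonempty infinitely often. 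This is precisely the ``quantitative growth'' input that your Step~1 is missing, and it does not come for free from the soft observation that the process exits every ball. Once you replace your depth-$R$ event by the frontier event and supply this lemma, the rest of your outline (the CMJ comparison with mean $\ge k e^{-2}(1-e^{-\lambda})\rho/2>1$ and the Borel--Cantelli upgrade on $\Omega_\infty$) goes through exactly as in the paper.
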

\mn
Here $P_\lambda$ is the annealed measure which is defined rigorously in Section 2. Suppose an individual in the comparison CMJ branching process gives birth only once and let $\tau$ denote that birth time. We can then look at the branching process in terms of time blocks of duration $\tau$ and roughly know the growth of this process by time $\tau$. This suggests that a ``block argument" should be a good strategy to prove the continuity of the phase transition at $\lambda_1$. Indeed, if the contact process $\xi^{\lambda}_t$ with infection rate $\lambda$ survives then it dominates a supercritical CMJ branching process. For a sufficiently small $\ep>0$, the contact process $\xi^{\lambda-\ep}_t$ should behave very much the same as $\xi^{\lambda}_t$ within a finite time block of length $\tau$ and thus also dominate a supercritical branching process. Hence we can show that
\begin{theorem}\label{cont}
The contact process on $\GW(D)$ dies out at  $\lambda_1(\GW(D))$ and the survival probability $p(\lambda)$ is continuous in $[0,\infty)$.
\end{theorem}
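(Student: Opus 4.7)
I would prove Theorem~\ref{cont} in three steps: right continuity of $p$, left continuity at every $\lambda>\lambda_1$, and $p(\lambda_1)=0$ as a consequence of the left-continuity argument.

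\emph{Right continuity.} For each fixed $t>0$, the finite-time survival probability $p_t(\lambda):=P_\lambda(\xi^0_t\neq\varnothing)$ is continuous in $\lambda$: coupling the Harris graphical representations at rates $\lambda$ and $\lambda'$ via Poisson thinning, the two restricted processes on $[0,t]$ agree whenever no discrepancy mark of rate $|\lambda-\lambda'|$ fires on an edge adjacent to the (a.s.~finite) visited region, an event whose probability tends to $0$ as $\lambda'\to\lambda$. Since $\{\xi^0_t\neq\varnothing\}$ is decreasing in $t$, $p=\inf_t p_t$ is upper semicontinuous, and combined with the monotonicity of $p$ in $\lambda$ this yields right continuity.

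\emph{Left continuity at $\lambda>\lambda_1$.} Fix $\delta>0$. Theorem~\ref{main} gives $|\xi^\lambda_T|\geq e^{cT}$ eventually on $\Omega_\infty$, and the infected set lies within a ball of $O(T)$-radius, so a pigeonhole on depths extracts an antichain in $\xi^\lambda_T$ of size growing exponentially in $T$. Separately, the construction underlying Theorem~\ref{main} packs the dominated CMJ's reproductions into finite windows of length $\tau$, and the Harris coupling between $\xi^{\lambda-\varepsilon}$ and $\xi^\lambda$ over a length-$\tau$ window differs only through rate-$\varepsilon$ missing marks; hence the expected CMJ offspring depends continuously on $\lambda$, so $\xi^{\lambda-\varepsilon}$ itself dominates a supercritical CMJ for $\varepsilon\in[0,\varepsilon_0]$ and $p(\lambda-\varepsilon)\geq q_0>0$ uniformly on this range. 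Choose $N$ large so that $(1-q_0)^N<\delta/3$, then $T$ with $P(\xi^\lambda_T$ contains an antichain of size $N)\geq p(\lambda)-\delta/3$, and finally $\varepsilon\in(0,\varepsilon_0)$ small enough that $\xi^{\lambda-\varepsilon}_t=\xi^\lambda_t$ for all $t\in[0,T]$ with probability $\geq 1-\delta/3$ (a finite-volume estimate, since for fixed $T$ the edge set adjacent to the visited region is a.s.~finite). On the intersection, $\xi^{\lambda-\varepsilon}_T$ contains an antichain $A$ of size $N$. Launching fresh contact processes $\tilde\xi^{(v)}$ at each $v\in A$ restricted to the descending subtree $T_v$: by the branching property the $T_v$ are iid $\GW(D)$ and the Poisson marks on them after time $T$ are independent across $v$, so the events $\{\tilde\xi^{(v)}$ survives$\}_{v\in A}$ are independent each of probability $p(\lambda-\varepsilon)$, and survival of any one forces $\xi^{\lambda-\varepsilon}$ itself to survive. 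Therefore
\begin{equation*}
p(\lambda-\varepsilon)\;\geq\;\bigl(p(\lambda)-\tfrac{2\delta}{3}\bigr)\bigl(1-(1-p(\lambda-\varepsilon))^N\bigr)\;\geq\;p(\lambda)-\delta,
\end{equation*}
and letting $\delta\downarrow 0$ gives $p(\lambda^-)=p(\lambda)$.

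\emph{Dying out at $\lambda_1$ and main obstacle.} The block argument above uses only $P(\Omega_\infty)>0$ to invoke Theorem~\ref{main}, so it extends to any $\lambda$ with $p(\lambda)>0$. Applied at $\lambda=\lambda_1$ under the assumption $p(\lambda_1)>0$, it would yield $p(\lambda_1-\varepsilon)>0$ for some $\varepsilon>0$, contradicting the definition of $\lambda_1$; hence $p(\lambda_1)=0$. The main technical obstacle I foresee is the quantitative CMJ perturbation --- showing that when $\lambda$ decreases by $\varepsilon$ the dominated CMJ's expected offspring decreases by at most $O(\varepsilon)$, uniformly on a $\lambda$-interval, so that the bound $p(\lambda-\varepsilon)\geq q_0>0$ really does hold on a full neighborhood $[0,\varepsilon_0]$. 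This step requires reading off the block construction from the proof of Theorem~\ref{main} and bounding the number of Poisson marks per reproduction block uniformly in $\lambda$ near the reference value; by contrast, the antichain pigeonhole and the finite-volume Harris-coupling estimate are standard once this uniform bound is in place.
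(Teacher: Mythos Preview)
Your overall architecture---right continuity via upper semicontinuity plus monotonicity, extinction at $\lambda_1$ by a block/CMJ perturbation argument, left continuity via a finite-window coupling followed by independent restarts---matches the paper's proof exactly. In particular, the step you flag as the ``main technical obstacle'' (perturbing the dominated CMJ in the infection rate) is precisely the paper's Lemma~\ref{lambda1}, and it is no harder than you suspect: one fixes $k,M_1$ at rate $\lambda$ so that the expected offspring exceeds $2$, then couples $\xi^\lambda$ and $\xi^{\lambda-\delta}$ on $[0,M_1]$ and observes that $P(\tau_k^{\lambda-\delta}<M_1)$ is close to $P(\tau_k^\lambda<M_1)$.

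There is, however, a genuine gap in your left-continuity argument, and it lies not where you expect but in the antichain/restart step. You claim that for an antichain $A\subset\xi^\lambda_T$ obtained by pigeonholing on depths, the descending subtrees $\{T_v\}_{v\in A}$ are i.i.d.\ $\GW(D)$ under the annealed law. This is false: vertices in $\xi^\lambda_T$ have been infected, and their subtrees may have been partially explored by the process before time $T$ (descendants of $v$ can be infected and then recover). Conditioning on the history up to $T$ therefore reveals information about $T_v$, and the conditional law of $T_v$ is not that of a fresh Galton--Watson tree; consequently the restarted processes $\tilde\xi^{(v)}$ do not survive independently with probability $p(\lambda-\varepsilon)$ each. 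Vertices at a common depth do form an antichain, but that alone does not make their subtrees unexplored.

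The paper handles this by working not with an arbitrary antichain but with the \emph{infected frontier} $\xi^0_t\cap F(A_t)$: vertices that are currently infected and have at least one child $x'$ whose entire subtree $S(x')$ satisfies $S(x')\cap A_t=\varnothing$. For such $x'$ the subtree $S(x')$ is genuinely unexplored and hence i.i.d.\ $\GW(D)$, independent of the past. The relevant stopping time is $\tau_m=\inf\{t:|\xi^0_t\cap F(A_t)|\ge m\}$, and Lemma~\ref{taukfinite} shows $\tau_m<\infty$ a.s.\ on $\Omega_\infty$; this replaces your pigeonhole entirely. Once you substitute ``$\tau_m^\lambda\le M_2$'' for ``$\xi^\lambda_T$ contains an antichain of size $N$'' and restart on the unexplored children rather than on the antichain vertices themselves, your inequality and the rest of the argument go through exactly as written.
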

\mn

Finally, we prove that
\begin{theorem}\label{lambda2}
The contact process on $\GW(D)$ does not survive strongly at $\lambda_2(\GW(D))$.
\end{theorem}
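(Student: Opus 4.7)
The plan is to prove that the set $S = \{\lambda \geq 0 : \text{the contact process on } \GW(D) \text{ survives strongly}\}$ is open in $[0,\infty)$. Monotonicity of the contact process in $\lambda$ makes $S$ an upper set, so openness forces $S = (\lambda_2, \infty)$, proving that strong survival fails at $\lambda_2$.

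To establish openness, fix $\lambda \in S$ and construct a local ``restart'' event $\mathcal{B} = \mathcal{B}(R,T,N)$ depending only on the graphical Poisson marks in the finite space-time box $B(0,R) \times [0,T]$, on which (starting from only the root infected) the root is re-infected at some time $s \in (0,T]$ and at time $T$ there exist at least $N$ infected descendants on the sphere of radius $R$ that can serve as independent ``seeds'' for subtree restarts. Strong survival at $\lambda$ ensures, via the $\liminf$ definition of $\lambda_2$, that with positive probability the root is re-infected at some finite time; Theorem \ref{main} then furnishes exponentially many infected sites at that time. With $R$, $T$, and $N$ chosen appropriately this yields $P_\lambda(\mathcal{B}) > 0$.

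Because $\mathcal{B}$ is determined by finitely many independent Poisson clocks on a finite region, $P_{\lambda'}(\mathcal{B})$ is continuous in $\lambda'$, so there exists $\lambda' < \lambda$ with $P_{\lambda'}(\mathcal{B}) > 0$. Iterating $\mathcal{B}$ exploits the self-similarity of $\GW(D)$: each of the $N$ seed vertices roots an independent Galton--Watson subtree, so by the strong Markov property every seed has positive probability of exhibiting its own translated restart event. Chaining these produces a supercritical branching structure of successful blocks, directly analogous to the CMJ comparison in Theorem \ref{main}. Each successful chain transmits a re-infection back to the global root, and supercriticality delivers infinitely many such chains with positive probability, giving strong survival at $\lambda'$.

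The main obstacle is designing $\mathcal{B}$ so that local returns compose into infinitely many global returns to the root. For this I would augment $\mathcal{B}$ by requiring that the up-paths from each seed to its parent remain available throughout a short extra time window after the block---a local event of positive probability---so that each successful block in a chain advances the re-infection one step closer to the global root. The resulting branching-process bookkeeping on a random tree with potentially unbounded degrees is where the bulk of the technical work will lie.
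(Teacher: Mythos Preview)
Your overall plan---show the strong-survival set is open by building a finite space--time block event whose probability is continuous in $\lambda$---matches the paper's. The genuine gap is exactly where you flag it: converting a supercritical tree of blocks marching \emph{away} from the root into infinitely many returns \emph{to} the root.

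The fix you sketch does not work as written. If each successful block pushes infection one edge toward the root while its seeds sit $R$ edges deeper, then after $k$ block generations the active seeds are at depth $kR$ and the accumulated push-back is only $k$: you never reach the global root. If instead you intend each block to push back all $R$ steps to the previous block's root, that costs a factor of order $(\lambda/(1+\lambda))^R$ per generation, so a chain of length $k$ costs $(\lambda/(1+\lambda))^{kR}$; you would then need the branching to produce enough level-$k$ blocks to beat this decay, i.e.\ a quantitative balance of the form $N\cdot P(\mathcal{B})\cdot(\lambda/(1+\lambda))^R>1$ that you have not arranged. Moreover, the push-back paths from different chains all traverse the same edges near the global root at overlapping times, so they are not independent and a naive product/union bound is unavailable.

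The paper avoids the deep-chain difficulty by recursing in \emph{time} at a single fixed spatial scale rather than iterating spatially. A preliminary lemma (Lemma~\ref{expsize}) shows that at some rate $\lambda_0\in(\lambda_1,\lambda)$ the infected frontier at time $t_0$ contains, with probability $\ge\delta$, at least $e^{c_{\lambda_0} t_0}$ vertices all at distance $\Theta(t_0)$ from the root. Setting $r_i=P_{\lambda-\epsilon,+}(0\in\eta^0_t\text{ for some }t\ge 2it_0)$, one shows $r_{i+1}\ge f(r_i)$ with $f'(0)>1$: each of the $e^{c_{\lambda_0} t_0}$ frontier vertices $x$ is, with probability at least $r_i\rho$, both reinfected after time $2it_0$ on its own branch \emph{and} ``$(m,\lambda-\epsilon)$-recurrent'' (so it can make $m$ attempts to push back the $O(t_0)$ steps to the root with aggregate success probability $\ge e^{-\varepsilon t_0}$). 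The parameters are chosen so that $\delta\rho\, e^{(c_{\lambda_0}-\varepsilon)t_0}>1$, giving a positive fixed point $r^*$ and hence $r_i\ge r^*$ for all $i$. A further technical point you would also have to confront is that on a random tree the events ``$x$ reinfected late'' and ``$x$ recurrent'' are not automatically independent; the paper handles this with the Harris/FKG inequality applied both to the graphical construction and to the i.i.d.\ offspring variables encoding the tree.
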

\mn
The proof of this result is more complicated than that of Theorem \ref{cont} but similar in spirit. We find the suitable ``block events" and construct a comparison process based on the block events. If the contact process with infection rate $\lambda$ survives strongly then it dominates a comparison process that guarantees strong survival, and hence the contact process survives strongly at $\lambda-\ep$ for  sufficiently small $\ep>0$.

\subsection{Periodic trees}
Let $\TT_{\kappa}=(a_1, a_2, \dots, a_\kappa)$ be a periodic tree in which the number of children in successive generations is $a_1, a_2, \ldots, a_\kappa$. We only assume $a_i\geq 1$ for $1\leq i\leq \kappa$ and $a_1\cdot a_2\cdots  a_k\neq 1$. In order for the graph to be quasi-transitive and thus simplify the computation, $\TT_\kappa=(a_1,a_2,\dots,a_\kappa)$ is arranged in the way that every vertex $x$ in $\TT_\kappa$ has one neighbor above it and $d(x)$ neighbors below it, where $d(x)\in\{a_1,\dots,a_\kappa\}$ is the offspring number of $x$. Without loss of generality, a distinguished vertex $o$ is chosen to be the root where $d(o)$ can be any value in $\{a_1, a_2, \ldots, a_\kappa\}$.

The treatment for the contact process on Galton-Watson trees can be extended to prove analogous conclusions on periodic trees.
\begin{theorem}\label{pgrowth}
Let $\xi^o_t$ denote the contact process on $\TT_\kappa$ starting with vertex $o$ infected.\\
\mn
(i) When $\lambda>\lambda_1(\TT_\kappa)$, there is a positive constant $c$ so that
\begin{equation}
\liminf_{t\to\infty} |\xi^o_t|/e^{c t} >0 \quad \text{ $P_\lambda$-a.s.  on }\Omega_\infty \equiv\{\xi^o_t\neq \varnothing \hh \forall t\geq 0\}.
\end{equation}
\mn
(ii) The contact process $\xi^o_t$ on the periodic tree $T_\kappa$ dies out at $\lambda_1(\TT_\kappa)$ and does not survive strongly at $\lambda_2(\TT_\kappa)$.
\end{theorem}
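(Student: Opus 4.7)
The plan is to mirror the proofs of Theorems \ref{main}, \ref{cont}, and \ref{lambda2} in the periodic setting, exploiting the quasi-transitivity of $\TT_\kappa$: the subtree rooted at any vertex is determined by its generation modulo $\kappa$, so every vertex at generation $n\kappa$ has a subtree isomorphic to the subtree rooted at $o$. I will therefore only use block events whose endpoints sit at generations that are multiples of $\kappa$, so that the embedded comparison branching process has a single type of individual and the CMJ construction used on $\GW(D)$ carries over essentially unchanged.

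For part (i), fix $\lambda > \lambda_1(\TT_\kappa)$. Following the proof of Theorem \ref{main}, for each vertex $x$ whose generation lies in $\kappa\ZZ$ define a block event $A_x$: starting from $x$ infected alone at time zero, by some deterministic time $T$ the infection has reached at least two descendants $\kappa$ generations below $x$ using only marks of the Harris graphical representation inside a specified bounded space-time region. Weak survival at $\lambda$, combined with the graphical-representation argument used in the proof of Theorem \ref{main} on $\GW(D)$, yields $P_\lambda(A_x)$ bounded away from zero. Iterating $A_x$ along newly infected descendants produces a supercritical CMJ process dominated by $(\xi^o_t)_{t\geq 0}$ with finite mean birth time, and the classical theory of supercritical CMJ processes then gives $\liminf_t |\xi^o_t|/e^{ct} > 0$ on $\Omega_\infty$.

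For part (ii), both conclusions reduce to a perturbation of the block events. Because $A_x$ depends only on finitely many Poisson marks inside a bounded space-time region, the probability $P_\lambda(A_x)$ is continuous and monotone in $\lambda$; if the embedded CMJ is strictly supercritical at $\lambda$, it remains supercritical at $\lambda-\ep$ for sufficiently small $\ep>0$, forcing the contact process to die out at $\lambda_1(\TT_\kappa)$. The absence of strong survival at $\lambda_2(\TT_\kappa)$ follows exactly as in Theorem \ref{lambda2}: I would construct block events witnessing a return of the infection to $o$, chain them into a dominated process that forces infinitely many such returns with positive probability, and transfer strong survival from $\lambda$ to some $\lambda' < \lambda$ by the same finite-space-time continuity.

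The main obstacle is arranging the block events at distinct vertices to be determined by disjoint regions of the graphical representation, so that the embedded process is a genuine branching process rather than merely a moment lower bound. On $\GW(D)$ the randomness of the tree packages this independence naturally; on $\TT_\kappa$ the tree is deterministic, so one must instead work with stopping times depending only on the Poisson graphical representation in a prescribed bounded box around each vertex. Once this bookkeeping is in place, quasi-transitivity supplies the self-similarity of blocks, and the remaining arguments transfer essentially verbatim from the Galton-Watson proofs.
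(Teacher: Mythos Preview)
Your outline is correct. For part~(i) and the ``dies out at $\lambda_1$'' half of~(ii) it coincides with the paper, which simply remarks that the frontier/CMJ argument of Lemmas~\ref{BPcomparison}--\ref{taukfinite} and the perturbation of Lemma~\ref{lambda1} carry over, the only change being that on $\TT_\kappa$ one may need $\kappa$ consecutive frontier births (rather than one) to guarantee $|\xi^o_t\cap F(A_t)|$ increases; your restriction to generations in $\kappa\ZZ$ is precisely what makes the comparison process single-type.

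For the ``no strong survival at $\lambda_2$'' half you propose translating the proof of Theorem~\ref{lambda2} directly, which would work, but the paper instead exploits the deterministic periodic structure through the growth profile $\beta(\lambda)=\lim_n[u(n\kappa)]^{1/n}$. Strong survival forces $\beta(\lambda)\ge 1$, and Lemma~\ref{utbeta} converts this into a pointwise bound $P(e_{n_0\kappa}\in\bar\xi^\lambda_{t_0})\ge(1-\delta)^{n_0}$ at a fixed space-time point; this is perturbed to $\lambda-\varep$ by continuity and iterated into a supercritical branching process on the levels $\LL_{mn_0}$, leading to the same recursion $r_{i+1}\ge f(r_i)$ with $f'(0)>1$. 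The gain over your route is that the annular estimate of Lemma~\ref{expsize} and the FKG/positive-correlation step from the $\GW(D)$ proof are replaced by a short subadditivity argument, available here precisely because the tree is non-random.

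One minor correction to your last paragraph: in the $\GW(D)$ proofs the independence of block events already comes from disjoint space-time regions in the graphical representation, not from the randomness of the tree---the tree randomness is used only to give each fresh subtree the same law. On $\TT_\kappa$ that identifying role is played by quasi-transitivity at levels in $\kappa\ZZ$, as you note, so the independence bookkeeping is no harder than in the Galton--Watson case.
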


We are also interested in determining if $\lambda_1<\lambda_2$ on general periodic trees, which is believed to be true by Pemantle \cite{Pemantle} and Stacey \cite{Stacey}. Stacey was able to prove $\lambda_1<\lambda_2$ on what he called ``isotropic block trees", which are to some extent similar to periodic trees but have more stringent assumptions on the structure. Intuitively, the assumptions require that there exist automorphisms on the tree that map distinguished vertices to one another. The rigorous definition of the isotropic block trees is a bit involved and hence will not be stated here. Curious readers are referred to page 1718 and 1719 in \cite{Stacey} for a full account. 

%

%
%

To see the connection between isotropic block trees and periodic trees we will  present two examples of the former.

\mn
\textbf{Example 1.} $G=(a_1, a_2, \dots, a_\kappa)$ where $a_i=1$ for $1\leq i\leq \kappa-1$ and $a_\kappa\geq 2$.

\mn
\textbf{Example 2.} $G=(a_1,a_2)$ where $a_1a_2\neq 1$, which is a general period-2 tree.\\

The above examples suggest that the isotropic block trees are, to some extent, similar to periodic trees. However, due to the somewhat stringent assumptions on the tree structure there are still many examples of periodic trees that fall outside this category.  One of the simplest examples among them is the periodic (2,3,4) tree, where the the number of children in successive generations is 2,3,4. 

The special structure of the isotropic blocks is a key ingredient that Stacey uses in his proof of
\beq\label{isotree}
\exp(\alpha t)\leq E(|\xi^A_t|)\leq C\exp(\alpha t),
\eeq
where $C$ is a constant, $\alpha$ is a continuous function of $\lambda$ and $A$ is a set of vertices chosen specially, see Proposition 2.1 in \cite{Stacey}. For example, for a period-2 tree $(a_1,a_2)$ the initially infected set $A$ is chosen to be a finite tree of depth 2 where the first generation has $a_1$ offspring and each vertex in the second generation has $a_2$ offspring. The relation (\ref{isotree}) implies that $\alpha=0$ when $\lambda=\lambda_1$. Hence at $\lambda_1$ not only $E(|\xi^A_t|)\leq C$ for all $t\geq 0$ but also $\xi^A_t$ dies out on an isotropic block tree. These facts are the key elements in Stacey's proof. 

As we will see, the extinction at $\lambda_1$ is the real key to the proof of the existence of an intermediate phase. For this reason we can prove $\lambda_1<\lambda_2$ for general periodic trees without trying to obtain the relation in (\ref{isotree}), which is a nice relation but stronger than necessary for our purpose. We confirm Stacey's conjecture by showing

\begin{theorem}\label{intphase}
The contact process on the periodic tree $T_\kappa$ has an intermediate phase in the sense that  $\lambda_1<\lambda_2$.
\end{theorem}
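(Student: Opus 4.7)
I would argue by contradiction. Suppose $\lambda_1(\TT_\kappa)=\lambda_2(\TT_\kappa)=:\lambda_c$. By Theorem~\ref{pgrowth}(ii) the process dies out at $\lambda_c$, yet by assumption it survives strongly at every $\lambda>\lambda_c$. The plan is to combine the extinction at $\lambda_c$ with a block/renewal argument in the spirit of the proofs of Theorems~\ref{cont} and \ref{lambda2} to rule out strong survival in a right-neighborhood of $\lambda_c$, producing the desired contradiction.

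The central object is a finite space-time block event $B=B(T,R)$ depending on the graphical representation only inside a time window $[0,T]$ and the ball of radius $R$ around the root. I aim for two properties. First, $\lambda\mapsto P_\lambda(B(T,R))$ is continuous for each fixed $T,R$, since $B$ depends on only finitely many Poisson arrivals. Second, there is a constant $p_c>0$, depending only on the branching structure of $\TT_\kappa$, such that if the contact process at rate $\lambda$ survives strongly then $P_\lambda(B(T,R))\geq p_c$. The natural choice for $B$ is a renewal-type event: starting from the root infected, by time $T$ the infection produces translated copies of the initial configuration at enough depth-$\kappa$ descendants of $o$ within the ball, and also returns to $o$ at some time in $[0,T]$. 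Iterating $B$ via the strong Markov property, and tracking the $\kappa$ vertex types of $\TT_\kappa$ indexed by depth $\bmod\,\kappa$, identifies strong survival of the contact process with supercriticality of an embedded multi-type Galton--Watson tree of ``root-returns,'' whose criticality is precisely $P_\lambda(B)=p_c$.

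Granted these two properties, the argument closes quickly. Extinction at $\lambda_c$ gives $P_{\lambda_c}(\xi^o_t\neq\varnothing)\to 0$ as $t\to\infty$; since $B(T,R)$ requires the infection to still be alive at time $T$, one has $P_{\lambda_c}(B(T,R))\to 0$ as $T\to\infty$. Choose $T,R$ large enough that $P_{\lambda_c}(B(T,R))<p_c$; by continuity in $\lambda$, the same inequality persists on some interval $[\lambda_c,\lambda_c+\delta]$; by the second property no strong survival can occur there, contradicting $\lambda_2=\lambda_c$.

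The main obstacle is the second property of $B$. Standard block comparisons (including those in the proofs of Theorems~\ref{cont} and \ref{lambda2}) easily give the opposite direction ``$P_\lambda(B)>p_c$ implies survival / strong survival.'' What is needed here is the renewal-theoretic converse, requiring $B$ to account for every mechanism by which the contact process can return infinitely often to the root, not merely one sufficient route. Arranging $B$ to cover all return events cleanly, in a translation-covariant way across the $\kappa$ vertex types of the periodic tree $\TT_\kappa$, is the technical heart of the proof; the extinction at $\lambda_c$ then enters precisely to deliver the quantitative gap $P_{\lambda_c}(B)<p_c$ that, via continuity in $\lambda$, propagates to a right-neighborhood of $\lambda_c$ and produces the contradiction.
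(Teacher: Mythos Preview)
Your high-level strategy---use extinction at $\lambda_1$ together with continuity in $\lambda$ of a finite-window quantity---is correct and is indeed what the paper does. However, your proposal leaves a genuine gap at exactly the point you flag as ``the technical heart'': the second property of $B$, namely that strong survival forces $P_\lambda(B(T,R))\geq p_c$ for some fixed $p_c$ independent of $T,R$. You propose to obtain this by identifying strong survival with supercriticality of an embedded multi-type Galton--Watson process of root-returns, but you do not carry this out, and the direction you need (strong survival $\Rightarrow$ embedded process supercritical) is not a consequence of standard block constructions; those deliver only the opposite implication. Without this converse the argument does not close, and it is not clear that any single finite block event can serve as a \emph{necessary} condition for strong survival in the way you require.

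The paper takes a different and more concrete route that sidesteps this difficulty entirely. Rather than seeking a block event necessary for strong survival, it works with the weight function $w_\rho(A)=\sum_{x\in A}\rho^{\ell(x)}$ with $\rho$ chosen so that $\gamma\rho^{2\kappa}=1$. Using the bound $\beta(\lambda_1)\leq 1/\gamma$ (Lemma~\ref{beta}) to control the far-from-root contribution and extinction at $\lambda_1$ to control the near-root contribution, Lemma~\ref{weight} shows that $\max_i E w_\rho(\xi^{i,\lambda_1}_{t_0})$ can be made arbitrarily small by choosing $t_0$ large. Continuity in $\lambda$ of this finite-time expectation then gives $\max_i E w_\rho(\xi^{i,\lambda}_{t_0})<\delta<1$ for some $\lambda>\lambda_1$, and a one-line supermartingale argument yields $w_\rho(\xi^{0,\lambda}_{nt_0})\to 0$ almost surely, which precludes strong survival at $\lambda$. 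Thus the paper replaces your hoped-for necessary block condition by a directly verifiable \emph{sufficient} condition for non-strong-survival; the special weight $w_\rho$, tuned to the branching factor $\gamma$ of the tree, is what makes this work.
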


\vspace{0.5cm}
The outline of the rest of the paper is as follows. In Section 2, we will give an explicit construction of the contact process on Galton-Watson trees. Theorems 3, 4 and 5 will be proved in Section 3, 4 and 5, respectively. Periodic trees will be discussed in Section 6.

\section{Model definition}\label{model}
The set of all realizations of the Galton-Watson trees with offspring distribution $D$ is denoted by $\GW(D)$ while a certain realization is denoted by $\TT\in \GW(D)$. Condition (\ref{offdist}) guarantees that $\TT$ is infinite almost surely.
 Let $P_{\TT,\lambda}$ be the law of the contact process with infection rate $\lambda$ on the tree $\TT$, and let $\mu$ be the probability measure over the realizations in $\GW(D)$. We define the annealed measure $P_\lambda$ to be
$$P_\lambda(\hh\cdot\hh)=E_{\TT \sim \mu}(P_{\TT,\lambda}(\hh\cdot\hh))=\sum_{\TT \in \GW(D)}P_{\TT,\lambda}(\hh\cdot\hh)\mu(\TT).$$
To simplify notation we will drop the index $\lambda$ when the context is clear and use $\GW(D)$ to also represent a Galton-Watson tree with offspring distribution $D$.

Let $\xi^0_t$ denote the contact process on $\GW(D)$ with initially only the root infected. The critical value for weak survival for the contact process on $\TT$ is defined as
$$\lambda_1(\TT)=\inf\{ \lambda: P_{\TT,\lambda}(\xi^0_t\neq\varnothing \text{ for all }t\geq 0)>0 \}.$$
In fact, $\lambda_1(\TT)$ does not depend on the specific realization $\TT\in\GW(D)$. That is, $\lambda_1(\TT)$ is a constant $\mu$-almost surely. Pemantle  proved a slightly different version of this result in Proposition 3.1 in \cite{Pemantle}. We will follow his proof to show 
\begin{prop}\label{constant}
$\lambda_1(\TT)$ is a constant $\mu$-almost surely.
\end{prop}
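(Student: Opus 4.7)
The plan is to establish a recursive inequality for the distribution function of $\lambda_1(\TT)$ and then use convexity of the offspring generating function to deduce that this distribution function can only take the values $0$ and $1$.

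I would first prove a recursive bound. Write $0$ for the root of $\TT$, $D$ for its offspring number, $v_1,\dots,v_D$ for its children, and $T_i$ for the subtree of $\TT$ rooted at $v_i$. By the standard construction of $\GW(D)$, conditional on $D$ the random trees $T_1,\dots,T_D$ are i.i.d.\ copies of $\GW(D)$ and are independent of $D$. The claim is
\[
\lambda_1(\TT)\leq \min_{1\leq i\leq D}\lambda_1(T_i)\quad \mu\text{-a.s.}
\]
Fix $i$ and $\lambda>\lambda_1(T_i)$. Because $T_i$ is a subgraph of $\TT$, the graphical-construction coupling makes the contact process on $T_i$ a subset of the one on $\TT$ from the same configuration, so $P_{\TT,\lambda}(\xi^{v_i}_t\ne\varnothing\ \forall t)\geq P_{T_i,\lambda}(\xi^{v_i}_t\ne\varnothing\ \forall t)>0$. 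Starting on $\TT$ from $\{0\}$, the event that the first clock event at $0$ is the $0\to v_i$ infection (probability $\lambda/(1+D\lambda)>0$) puts the system in state $\{0,v_i\}$; by attractiveness and the strong Markov property the process then survives with at least the above positive probability, so $\lambda\geq\lambda_1(\TT)$.

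Next I would extract a fixed-point relation. Let $F(x)=\mu(\lambda_1(\TT)\le x)$ and let $G(s)=E[s^D]$ be the probability generating function of $D$. The recursive bound together with the conditional i.i.d.\ structure gives
\[
F(x)\geq \mu\bigl(\min_i\lambda_1(T_i)\leq x\bigr)=1-E\bigl[(1-F(x))^D\bigr]=1-G(1-F(x)),
\]
i.e.\ $G(1-F(x))\geq 1-F(x)$. Under (\ref{offdist}) one has $G(0)=0$, $G(1)=1$, and $P(D\geq 2)>0$ (since $ED>1$ rules out $D\equiv 1$), so $G$ is strictly convex on $[0,1]$ and therefore $G(s)<s$ on $(0,1)$, with $0$ and $1$ the only fixed points of $G$ in $[0,1]$. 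Combining $G(1-F(x))\geq 1-F(x)$ with the general bound $G(s)\leq s$ forces equality, so $1-F(x)$ is a fixed point of $G$, hence $F(x)\in\{0,1\}$ for every $x\geq 0$. A CDF with this property must be a single-step function, so $\lambda_1(\TT)$ is $\mu$-a.s.\ constant.

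The only delicate point is the recursive inequality above: one must combine the monotone coupling from the inclusion $T_i\hookrightarrow\TT$ with an attractiveness/strong-Markov argument that transfers survival from $\{v_i\}$ to $\{0\}$ on $\TT$. Once this is nailed down, the convex fixed-point analysis is routine.
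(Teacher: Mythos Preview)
Your proof is correct and follows essentially the same route as the paper's: both establish the recursive implication that survival on a subtree $T_i$ forces survival on $\TT$ (the paper writes $q(\TT)\ge \frac{\lambda}{1+\lambda}q(\TT_i)$, you phrase it as $\lambda_1(\TT)\le\min_i\lambda_1(T_i)$), and both then feed this into the generating-function inequality $p\le f(p)$ (equivalently $G(1-F(x))\ge 1-F(x)$) and use $f(0)=0$, $f'(1)>1$ to force the value into $\{0,1\}$. Your packaging via the CDF of $\lambda_1(\TT)$ is slightly cleaner in that it sidesteps any ambiguity about behavior exactly at $\lambda_1$, but the substance is the same.
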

\begin{proof}
Let $q(\TT)=P_{\TT,\lambda}(\xi^0_t\neq\varnothing \hh \forall t\geq 0)$ and $p_1=P_\lambda(q=0)$. If the root has $n$ children then there are $n$ independent identically distributed subtrees that are themselves Galton-Watson trees with offspring distribution $D$, which we denote by $\TT_1,\dots,\TT_n$. If $q(\TT_i)>0$ for some $1\leq i\leq n$ then $q(\TT)\geq \frac{\lambda}{1+\lambda}q(\TT_i)>0$, where the first term is the probability to push the infection to the root of $\TT_i$. It follows that if the contact process dies out on $\TT$ then it has to die out on all $n$ subtrees $\TT_1,\dots,\TT_n$.

Let $f$ be the generating function of $D$ and let $a_k=P(D=k)$. By the independence of the structures of the subtrees,
\beq\label{fixp}
p_1=P_\lambda(q=0)\leq \sum_{n}a_n(p_1)^n=f(p_1)
\eeq
Note that $f'(1)=ED>1$ by (\ref{offdist}). (\ref{fixp}) combined with $f'(1)>1$ and $f(0)=0$ implies either $p_1=1$ or $p_1=0$. Hence $\lambda_1(\TT)$ is a constant $\mu$-almost surely.
\end{proof}

The critical value for strong survival is defined as 
$$\lambda_2(\TT)=\inf\{ \lambda: P_{\TT,\lambda}(0\in \xi^0_t \hh\text{ infinitely often})>0 \}.$$
It follows from the same reasoning as in the proof of Proposition \ref{constant} that
\begin{prop}
$\lambda_2(\TT)$ is a constant $\mu$-almost surely.
\end{prop}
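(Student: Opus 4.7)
My plan is to imitate the proof of Proposition \ref{constant} almost verbatim. Define
$$q_2(\TT)=P_{\TT,\lambda}(0\in\xi^0_t\hh\text{i.o.})\quad\text{and}\quad p_2=P_\lambda(q_2=0).$$
The central task is to show $p_2\in\{0,1\}$ for each fixed $\lambda\ge 0$. Once this is in hand, a standard monotonicity argument (using that $\lambda\mapsto q_2(\TT,\lambda)$ is monotone and that $\mathbb{Q}$ is countable) implies $\lambda_2(\TT)=\inf\{\lambda\in\mathbb{Q}:P_\lambda(q_2>0)>0\}$ for $\mu$-a.e.\ $\TT$, so $\lambda_2(\TT)$ is $\mu$-a.s.\ constant.

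To obtain $p_2\in\{0,1\}$ I would establish the exact analogue of the bound used in Proposition \ref{constant}: writing $\TT_1,\ldots,\TT_n$ for the subtrees rooted at the $n$ children $c_1,\ldots,c_n$ of the root,
$$q_2(\TT)\ge \frac{\lambda}{1+\lambda}\,q_2(\TT_i),\qquad i=1,\ldots,n.$$
Contrapositively this gives $\{q_2(\TT)=0\}\subseteq\{q_2(\TT_i)=0\text{ for all }i\}$, and conditioning on $D$ together with the independence of the subtrees yields $p_2\le f(p_2)$. The verbatim PGF argument from Proposition \ref{constant} ($f$ convex, $f(0)=0$, $f'(1)=ED>1$) then forces $p_2\in\{0,1\}$.

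The inequality itself I would prove in two steps using the Harris graphical construction. First, with probability $\lambda/(1+\lambda)$ the root $0$ sends an infection arrow to $c_i$ before recovering; conditional on this and restarting at that time, a monotone coupling with the contact process on $\TT_i$ alone started from $\{c_i\}$ (which the process on $\TT$ dominates when restricted to $\TT_i$, thanks to the extra infection arriving through $0$) shows that $c_i$ is infected i.o.\ in $\xi_t$ with conditional probability at least $q_2(\TT_i)$. Second---and this is the only genuinely new ingredient compared to Proposition \ref{constant}---I would show that on $\{c_i\text{ infected i.o.}\}$ the root $0$ is also infected i.o.\ almost surely. The reason is that on this event the Lebesgue measure of $\{t:c_i\in\xi_t\}$ is a.s.\ infinite, so the independent rate-$\lambda$ Poisson process of $c_i\to 0$ infection arrows has infinitely many arrivals inside it; if $0$ were healthy for all $t>T$ for some finite $T$, any such arrival after $T$ would re-infect $0$, contradicting permanent healthiness. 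This Borel--Cantelli--type step is the one I expect to need a little care; the rest is a direct transcription of the proof of Proposition \ref{constant}.
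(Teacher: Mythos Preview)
Your proposal is correct and follows exactly the approach the paper intends: the paper gives no separate proof for this proposition, merely stating that ``it follows from the same reasoning as in the proof of Proposition~\ref{constant}.'' You have correctly identified the one extra wrinkle compared with the weak-survival case, namely that $\{c_i\in\xi_t\text{ i.o.}\}\subseteq\{0\in\xi_t\text{ i.o.}\}$ almost surely, and your Borel--Cantelli sketch for it is sound (the infinite Lebesgue-measure claim can be justified via the strong Markov property at the successive infection times of $c_i$, or bypassed entirely by the conditional Borel--Cantelli argument you outline).
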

\mn 
From now on we write $\lambda_1(\GW(D))$ and $\lambda_2(\GW(D))$ for the critical values for the contact process on a Galton Watson tree with offspring distribution $D$.

\section{Exponential growth}\label{expgrowth}
In this section we prove Theorem \ref{main} by showing that the contact process dominates a supercritical Crump-Mode-Jagers branching process. On an inhomogeneous structure such as the Galton-Watson tree, the infection state of a vertex $x$ is correlated with the tree structure near $x$. To address this issue we observe that starting with only the root infected, when a vertex $x$ becomes infected for the first time no information is known about the structure of the subtree rooted at $x$ (i.e., the subtree made of $x$ and all of its descendants). Hence we can ``decouple" the infection state of $x$ and the tree structure below it.

For a vertex $x\in\TT$ we define $S(x)$ to be the \textit{subtree} with root $x$, that is, $S(x)$ contains $x$ together with all the descendants of $x$ on $\TT$. For a finite set $A \subset \TT$, we define its \textit{frontier} $F(A)$ to be the set of points $x\in A$ for which at least one of its children, say $x'$, has $S(x')\cap A=\varnothing$. Let $A'$ denote the set of $x'$ such that $x'$ is the child of some $x\in A$ and $S(x')\cap A=\varnothing$.

Let $A_t=\cup_{s\leq t} \hh\xi^0_s$ be the set of vertices ever infected by time $t$. We are interested in the set of vertices that have never been infected and are accessible to the infection $\xi^0_t$ at time $t$, i.e.,
$$
B_t=\{ x\in (A_t)': \text{ the parent of $x$ is in $\xi^0_t \cap F(A_t)$}\}.
$$
Let $\tau_k=\inf\{t\geq 0: |\xi^0_t\cap F(A_t)|\geq k \}$. The fact that each vertex in $|\xi^0_t\cap F(A_t)|$ corresponds to at least one vertex in $B_t$ gives $|B_{\tau_k}|\geq k$, i.e., there are at least $k$ unexplored subtrees accessible to the contact process $\xi^0_{\tau_k}$. At time $\tau_k$, $\xi^0_t$ can try to start infections on each of the $k$ subtrees, among which each infection could survive forever with a positive probability equal to $P_\lambda(\xi^0_t\neq\varnothing \hh \forall t\geq 0)$. When $k$ is taken to be sufficiently large, at time $\tau_k$ the contact process $\xi^0_t$ gives birth to an expected number of more than one new process that lives forever, which make up an underlying branching process.

Recall that $\Omega_\infty =\{\xi^0_t\neq \varnothing \hh \forall t\geq 0\}$. We will begin the proof of Theorem \ref{main} by showing
\begin{lemma}\label{BPcomparison}
Suppose $\lambda>\lambda_1(\GW(D))$. If for every $k\in\mathbb{N}$, $\tau_k<\infty$ $P_\lambda$-almost surely on $\Omega_\infty$ then \eqref{expg} holds. 
\end{lemma}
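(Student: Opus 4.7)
The strategy is to use the $k$ unexplored subtrees guaranteed at time $\tau_k$ to embed a supercritical generation-based branching process inside $\xi^0_t$, deduce exponential growth with positive probability, and then promote this to a.s.\ on $\Omega_\infty$ by running many independent attempts at the later times $\tau_{k'}$, exploiting that the hypothesis $\tau_{k'}<\infty$ holds for \emph{every} $k'$.

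\textbf{Positive-probability exponential growth.} Set $p=P_\lambda(\Omega_\infty)>0$ (positive since $\lambda>\lambda_1$), $q=\tfrac{\lambda}{1+\lambda}\,p$, and choose $k_0$ so large that $k_0q>2$. By hypothesis $\tau_{k_0}<\infty$ $P_\lambda$-a.s.\ on $\Omega_\infty$, so pick $M<\infty$ with $P_\lambda(\tau_{k_0}\le M\mid\Omega_\infty)>2/(k_0q)$, ensuring $m:=k_0q\cdot P_\lambda(\tau_{k_0}\le M\mid\Omega_\infty)>1$. Build a Galton--Watson chain $(Z_n)$ as follows. Its ancestor is the root of $\TT\sim\GW(D)$. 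Given an individual born at vertex $y$ at time $t$ whose subtree $S(y)$ is, at birth, a fresh $\GW(D)$, consider the Harris-restricted contact process on $S(y)$ started from $\{y\}$ at time $t$; if its frontier time exceeds $M$, declare the offspring count to be $0$; otherwise, at time $t+\tau_{k_0}$ pick $k_0$ \emph{distinct} frontier-infected vertices $x_1,\dots,x_{k_0}$, each with an unexplored child $y_i$, and let the offspring of the individual be those $y_i$ for which (i)~the next Harris mark at $x_i$ is the arrow $x_i\to y_i$ firing, and (ii)~the Harris-restricted contact process on $S(y_i)$ started from $\{y_i\}$ at that firing time survives forever. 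Distinctness of the $x_i$ makes the Poisson data underlying (i) pairwise disjoint, while disjointness of the $S(y_i)$ and the Markov branching property of $\GW(D)$ make the data underlying (ii) independent and the subtrees i.i.d.\ $\GW(D)$. Hence the $k_0$ offspring indicators are conditionally independent with success probability at least $q$ each, and the per-individual offspring count stochastically dominates $\mathbf{1}_{\{\tau_{k_0}\le M\}}\cdot\mathrm{Bin}(k_0,q)$, of mean $\ge m>1$. The resulting supercritical chain has survival probability $\rho>0$, and on non-extinction $Z_n\ge(1+\delta)^n$ eventually a.s.\ for some $\delta>0$. Since the $Z_n$ generation-$n$ individuals live in disjoint subtrees, each still harboring an infected vertex at time $nM$, we have $|\xi^0_{nM}|\ge Z_n$, and so $\liminf_t|\xi^0_t|/e^{ct}>0$ with $P_\lambda$-probability $\ge\rho$, where $c:=\log(1+\delta)/M$.

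\textbf{Upgrading to a.s.\ on $\Omega_\infty$.} Set $A:=\{\liminf_t|\xi^0_t|/e^{ct}>0\}$; by the previous paragraph $P_\lambda(A)\ge\rho$. For any $k'\in\mathbb{N}$ the hypothesis gives $\tau_{k'}<\infty$ a.s.\ on $\Omega_\infty$, and at time $\tau_{k'}$ there are $k'$ distinct frontier-infected vertices with unexplored children $y_1',\dots,y_{k'}'$ whose subtrees $S(y_i')$ are i.i.d.\ $\GW(D)$ independent of $\mathcal{F}_{\tau_{k'}}$. For each $i$, the event ``$y_i'$ gets infected (probability $\lambda/(1+\lambda)$ via the first-mark race at its parent) and the Harris-restricted contact process on $S(y_i')$ started at $y_i'$ at that time satisfies the $S(y_i')$-analog of $A$'' has probability at least $\theta:=\tfrac{\lambda}{1+\lambda}\rho>0$; by Harris disjointness these $k'$ events are conditionally independent given $\mathcal{F}_{\tau_{k'}}$. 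Monotonicity of the contact process in its initial configuration implies that any one of these events forces $A$ for $\xi^0$ itself. Thus $P_\lambda(A^c\mid\mathcal{F}_{\tau_{k'}})\le(1-\theta)^{k'}$ a.s.\ on $\{\tau_{k'}<\infty\}$, and taking expectations (using $P_\lambda(\tau_{k'}<\infty\mid\Omega_\infty)=1$) yields $P_\lambda(A^c\cap\Omega_\infty)\le(1-\theta)^{k'}$ for every $k'$; letting $k'\to\infty$ gives $P_\lambda(A^c\cap\Omega_\infty)=0$, which is \eqref{expg}.

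\textbf{Main obstacle.} The conceptually delicate step is the conditional independence of the $k_0$ (and later, $k'$) offspring events attached to a single individual. It forces us to isolate disjoint Harris Poisson data for each of the transmission attempts --- possible only because we arrange the frontier parents $x_i$ to be distinct --- and to invoke the Markov branching property of the $\GW(D)$ measure, so that the subtrees below unexplored children are i.i.d.\ $\GW(D)$ and independent of the revealed history. The truncation at $M$ is the other key technical device: it bounds the time per branching-process generation without any integrability hypothesis on $\tau_{k_0}$, and the branching process remains supercritical as long as $M$ is chosen so that $P_\lambda(\tau_{k_0}\le M\mid\Omega_\infty)>1/(k_0q)$.
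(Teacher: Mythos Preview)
Your overall strategy is the same as the paper's: embed a supercritical branching process inside the contact process using the unexplored subtrees available at the frontier time $\tau_k$, deduce exponential growth with positive probability, and then upgrade to almost-sure on $\Omega_\infty$ by running many independent attempts at $\tau_{k'}$ and letting $k'\to\infty$. Where the paper works with a continuous-time CMJ process (births at the random time $\tau_k$) and appeals to the convergence theorem of Ganuza--Durham, you instead build a discrete-generation Galton--Watson chain with a deterministic time cap $M$ per generation and use only elementary GW theory; this is a legitimate and arguably more elementary route. Your additional requirement (ii) that each offspring's subtree-restricted process survive is a nice touch, since $\{N\ge 1\}\subset\Omega_\infty$ then gives $E[N\mid\Omega_\infty]=E[N]/p$ directly, making the conditional mean bound clean.

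Two technical points need repair, however. First, event (i) as literally stated --- ``the next Harris mark at $x_i$ is the arrow $x_i\to y_i$'' --- is a race among \emph{all} marks at $x_i$ and therefore has probability $\lambda/(1+(\deg x_i)\lambda)$, not $\lambda/(1+\lambda)$; when $D$ is unbounded this has no uniform positive lower bound, so your constant $q=\tfrac{\lambda}{1+\lambda}\,p$ is not justified. Second, the claim $|\xi^0_{nM}|\ge Z_n$ requires every generation-$n$ individual to be born by time $nM$, but with your definition the birth time along a branch is $\sum_{j<n}(\tau_{k_0}^{(j)}+E_j)$ where $E_j$ is the (random, unbounded) waiting time for event (i); this is $\le nM+\sum_j E_j$, not $\le nM$. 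Both issues are cured simultaneously by the paper's device: replace (i) by the deterministic-duration event ``there is an arrow $x_i\to y_i$ in the next unit of time and neither $x_i$ nor $y_i$ has a death mark in that unit'', which has probability $\ge e^{-2}(1-e^{-\lambda})$ independent of $\deg x_i$ and takes exactly one time unit, so generation $n$ is born by time $n(M+1)$ and your inequality becomes $|\xi^0_{n(M+1)}|\ge Z_n$.
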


\begin{proof}
When $\lambda>\lambda_1(\GW(D))$ we have $\rho:=P_\lambda(\xi^0_t\neq\varnothing \hh \forall t\geq 0)>0$. Observe that with probability $\geq e^{-2}(1-e^{-\lambda})$ an infected vertex  $x\in \xi^0_t\cap F(A_t)$ can infect its child $x'\in B_t$ within time 1 and $x'$ will stay infected until time 1. 

Since the subtree $S(x')$ is still unexplored when $x'\in B_t$ first becomes infected, we can first generate $S(x')$ according to the measure $\mu$ of $\GW(D)$ and then start a contact process on it. The probability that the process $\xi^{x'}_t$ restricted to $S(x')$ will survive is 
$$E_{S(x')\sim \mu}(P_{S(x'),\lambda}(\xi^{x'}_t\neq t \text{ for all }t\geq 0))=\rho.$$

If this infection on $S(x')$ survives, we say $x'$ is a {\it particle with an infinite line of descendants}.  Note that each particle with an infinite line of descendants will contribute at least size 1 to $|\xi^0_t|$ at all times. So it suffices to show the number of particles with an infinite line of descendants grows exponentially in time on the event $\Omega_\infty$.

We will construct a branching process $Z_t$ that is dominated by the set of particles that has an infinite line of descendants. Let $Z_0$ contain only the root 0. Since $\{\tau_k<\infty\}$ almost surely on $\Omega_\infty$ we can choose $M_1$ large so that $P(\tau_k < M_1)\geq\rho/2$. If $\tau_k \geq M_1$ we discard the initial particle and there are no offspring. If $\tau_k < M_1$ then the initial particle gives birth to $X\sim Binomial(k, e^{-2}(1-e^{-\lambda}))$ offspring at time $\tau_k$. This is because the birth events of different vertices in $\xi^0_t\cap F(A_t)$ are independent, since they involve vertices that are disjoint in space. If $k$ is chosen large enough then the number of offspring $\bar X$ that will start a new process on its subtree with $\tau_k < M_1$ has expectation
$$
E\bar X \ge ke^{-2}(1-e^{-\lambda})\rho/2>1,
$$
which implies that $Z_t$ is supercritical.

The branching process is a Crump-Mode-Jagers (CMJ) branching process \cite{CMJ} because the initial particle gives birth at time $\tau_k$. 
If $N(t)$ is the expected number of births by time $t$ then the Malthusian parameter $c$ is defined through
$$
\int_0^\infty e^{-c t} dN(t) = 1.
$$ 
Since our process has no births after time $M_1$
it is easy to see that it satisfies the conditions in Theorem 2 of \cite{Ganuza}, which gives
$$
W(t):=\frac{Z_t}{e^{c t}}\to W \quad \text{ a.s. as }t\to\infty,
$$
where $W$ is a non-negative random variable with $P(W>0)>0$. To improve this to the desired conclusion we note that
for any $m < \infty$, $\tau_m<\infty$ almost surely on $\Omega_\infty$. At time $\tau_m$, each vertex in $\xi^0_t\cap F(A_t)$ can start an independent branching process construction with probability at least $e^{-2}(1-e^{-\lambda})$ and each branching process will live forever with probability $P(W>0)$. As long as one of the branching processes lives forever we will have the desired exponential growth of $\xi^0_t$. Hence 
\begin{align*}
P\left(\liminf_{t\to\infty}|\xi^0_t|/e^{c t} > 0 \right) &\ge P(\Omega_\infty)P(Binomial(m, e^{-2}(1-e^{-\lambda})P(W>0))\geq 1)\\
&= P(\Omega_\infty)\left(1-\left(1-e^{-2}(1-e^{-\lambda})P(W>0)\right)^m\right)
\end{align*}
for any $m\in \mathbb{N}$. Letting $m \to\infty$ and noting that the limit can only be positive on $\Omega_\infty$ proves the desired result.
\end{proof}

Having established Lemma \ref{BPcomparison}, to complete the proof of Theorem \ref{main} it remains to show

\begin{lemma}\label{taukfinite}
For any $k\in\mathbb{N}$, $\tau_k < \infty$ almost surely on $\Omega_\infty$.
\end{lemma}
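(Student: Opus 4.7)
The plan is to prove the lemma by induction on $k$. The base case $k=1$ is immediate: at $t=0$ we have $A_0 = \{0\}$, and since $P(D \geq 1) = 1$, the root has at least one child whose subtree does not meet $A_0$, so $0 \in F(A_0)$ and $\tau_1 = 0$.

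For the inductive step, I would use the strong Markov property at $\tau_{k-1}$ together with the branching property of $\GW(D)$. At $\tau_{k-1}$ there are at least $k-1$ vertices in $\xi^0 \cap F(A)$, and at least one of them, call it $x$, has a fresh child $y$ whose subtree $S(y)$ is, conditionally on $\mathcal{F}_{\tau_{k-1}}$, distributed as an independent $\GW(D)$. Let $T_y$ be the first time $y$ becomes infected in the global process. With conditional probability at least $\lambda/(1+\lambda)$, $x$ infects $y$ before recovering, i.e., $P(T_y < \infty \mid \mathcal{F}_{\tau_{k-1}}) > 0$. On $\{T_y < \infty\}$, the global process on $S(y)$ from $T_y$ onward, driven by the Poisson clocks inside $S(y)$ and started with $y$ infected, is itself a contact process on an independent $\GW(D)$ rooted at $y$. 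Applying the inductive hypothesis to this sub-process, on its survival event (probability $\rho = P_\lambda(\Omega_\infty) > 0$), its frontier-infected count reaches $k-1$ at some almost surely finite time. Combining these $k-1$ frontier vertices inside $S(y)$ with at least one additional frontier-infected vertex outside $S(y)$ --- which can be arranged with positive probability, e.g.\ by restricting to the event that $x$ has at least two fresh children (probability at least $P(D \geq 2) > 0$ by the branching property, since $ED > 1$) so that $x$ itself remains in $F$ after infecting $y$ --- yields $|\xi^0_t \cap F(A_t)| \geq k$ at some time, hence $\tau_k < \infty$.

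To upgrade this positive-probability conclusion to an almost-sure one, I would use that on $\Omega_\infty$ we have $|A_t| \to \infty$ (otherwise the process would be confined to a finite subgraph and die out). Consequently infinitely many first-infection events $v_1, v_2, \ldots$ occur, each producing a fresh independent subtree $S(v_n) \sim \GW(D)$ by the branching property, and hence an essentially independent attempt at the construction above. A conditional Borel--Cantelli argument on these attempts then forces at least one to succeed, giving $\tau_k < \infty$ almost surely on $\Omega_\infty$. The hard part will be a coupling subtlety: the ``frontier'' of the auxiliary sub-process on $S(y)$ with respect to its own exploration does not automatically coincide with the global frontier restricted to $S(y)$, because the global process may re-enter $S(y)$ through repeated re-infections of $y$ from outside and explore parts of $S(y)$ the sub-process has not, potentially destroying the global-frontier status of some local-frontier vertices. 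To handle this, one uses the fact that infection enters $S(y)$ only through $y$, so the global exploration of $S(y)$ is a union of waves each of which is itself a contact process on $S(y)$ starting from $y$; a careful comparison with the first such wave, to which the inductive hypothesis applies cleanly, preserves enough frontier vertices to complete the argument.
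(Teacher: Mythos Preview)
Your inductive scheme contains genuine gaps. The most serious is simultaneity: you propose to combine the $k-1$ frontier-infected vertices produced inside $S(y)$ by the sub-process at its random time $\tau_{k-1}^{\mathrm{sub}}$ with the single vertex $x$ outside. But $\tau_{k-1}^{\mathrm{sub}}$ is an unbounded (though a.s.\ finite) random time, and nothing in your argument keeps $x$ infected until then; ``$x$ remains in $F$ after infecting $y$'' is not the same as ``$x$ remains in $\xi^0_t\cap F(A_t)$ at time $T_y+\tau_{k-1}^{\mathrm{sub}}$''. Relatedly, the claim that $x$ has at least two fresh children with probability $\geq P(D\ge 2)$ ``by the branching property'' is unjustified: at $\tau_{k-1}$ the vertex $x$ has already been explored and the number of its remaining fresh children is $\mathcal{F}_{\tau_{k-1}}$-measurable, so no such bound is available. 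Second, the local/global frontier discrepancy you yourself flag is a real obstruction, not a detail to be deferred: re-infections of $y$ from outside launch additional waves in $S(y)$ that can explore descendants of your $k-1$ local-frontier vertices and strip them of global-frontier status, and the phrase ``a careful comparison with the first wave'' does not explain how any global-frontier vertices are preserved.

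The paper sidesteps all of this by dropping both the induction and the auxiliary sub-process. Starting from any time with $|\xi^0_t\cap F(A_t)|\geq 1$, it asks for $2k+1$ consecutive births, each onto a vertex with $\geq 2$ offspring, before any recovery occurs. Since every such birth lands on a genuinely fresh vertex, the branching property applies cleanly (probability $P(D\ge 2)$ per step), the \emph{global} frontier count is increased directly, and there is no local/global discrepancy or simultaneity issue to manage. Each trial succeeds with probability at least $\alpha_k=\bigl(\tfrac{\lambda}{1+\lambda}P(D\ge 2)\bigr)^{2k+1}$; on $\Omega_\infty$ one has $|A_t|\to\infty$, hence $\xi^0_t\cap F(A_t)\neq\varnothing$ infinitely often, and the trials occupy disjoint space-time regions so that one succeeds almost surely.
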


\begin{proof}
If some particle $x\in \xi^0_t\cap F(A_t)$ gives birth to a particle $y\in B_t$, then at the moment of this birth $y$ is added to $\xi^0_t\cap F(A_t)$. In the worst case $y$ is the last unexplored child of $x$, so $x$ is removed from $\xi^0_t\cap F(A_t)$ and the size of $\xi^0_t\cap F(A_t)$ remains the same. Note that the size of $\xi^0_t\cap F(A_t)$ will not decrease at a birth event.

When $y\in B_t$ becomes infected, we try to grow $|\xi^0_t\cap F(A_t)|$ through consecutive birth events on the subtree $S(y)$. Since $S(y)$ has not been explored by the infection, every vertex in $S(y)$ still has all of its offspring unexplored. When a children of $y$ (except the last one) is infected, the size of $\xi^0_t\cap F(A_t)$ increases by 1. Hence if every particle gives birth onto a particle with offspring number at least 2, we need at most $2k$ consecutive birth events on $S(y)$ for $|\xi^0_t\cap F(A_t)|$ to reach size $k$.

Taking account of the first birth event from $x$ to $y$, we need $2k+1$ consecutive birth events before any death occurs and each of them is onto a vertex with offspring number at least 2.  Since each vertex in $S(y)\cap \xi^0_t\cap F(A_t)$ is connected to at least one vertex outside, the probability that a birth occurs before any death in $S(y)\cap \xi^0_t\cap F(A_t)$ has probability $\geq \lambda/(1+\lambda)$. 

We start the first trial at the first time when $|\xi^0_t\cap F(A_t)|\geq 1$, i.e., when $t=0$. A trial is said to have failed if a death occurs or a birth to a vertex with offspring number 1 occurs before we have $2k+1$ consecutive birth events to vertices with offspring number at least 2. Define the running time $S_i$ of the $i$-th trial to be the first time either this trial fails or succeeds. $S_i$ is dominated by the sum of $2k+1$ i.i.d. $Exp(1+\lambda)$ random variables and hence is finite almost surely. Let $T_0=0$ and define
$$
T_{i}=\inf\{ t\geq T_{i-1}+S_{i-1} : |\xi^0_t\cap F(A_t)|\geq 1\}
$$
to be the time when the $i$-th trial starts. If $T_i=\infty$ for some $i\geq 1$ then set $T_j=\infty$ for all $j\geq i$. Note that with an independent probability 
$$
\geq \left(\frac{\lambda}{1+\lambda}\cdot P(D\geq 2)\right)^{2k+1} \equiv \alpha_k
$$ 
a trial will succeed, so it takes finitely many attempts to have one success, which then gives $|\xi^0_t\cap F(A_t)|\geq k$. If the $i$-th trial fails, either we still have $|\xi^0_t\cap F(A_t)|\geq 1$, which means $T_{i+1}=T_i+S_i$, or $|\xi^0_t\cap F(A_t)|=0$ and we have to wait until $|\xi^0_t\cap F(A_t)|$ becomes 1 before starting the next trial. On the event that $\xi^0_t$ survives, infinitely many sites will be infected, i.e.,
$$
\lim_{t\to\infty} |A_t|=\infty \quad \text{ a.s.  on } \Omega_\infty.
$$
Since $A_t$ cannot grow when $\xi^0_t\cap F(A_t)=\varnothing$ we have $|\xi^0_t\cap F(A_t)|\geq 1$ infinitely often on the event $\Omega_\infty$, meaning that for any $n\in \mathbb{N}$
$$
\{T_n<\infty\} \hh \text{ a.s.  on }\Omega_\infty.
$$ 
Hence we have infinitely many trials almost surely on the event of survival. Since the birth and death symbols involved in different trials come from regions disjoint in space and time, each trial succeeds with probability $\ge \alpha_k$ independent of the fates of previous trials. Therefore we will have a success in finite time almost surely on $\Omega_\infty$. The proof of Lemma \ref{taukfinite} is complete and Theorem \ref{main} has been established. 
\end{proof}

\begin{remark}
Our method is robust in the sense that it works on a broad class of trees. For example, in order to prove exponential growth for the contact process on periodic trees it suffices to extend the proof of Lemma \ref{taukfinite} to periodic trees. The key element in the proof of Lemma \ref{taukfinite} is the expansion of the infected frontier $\xi^0_t\cap F(A_t)$ through consecutive birth events, which holds true for periodic trees too. It is easy to see that if $a_i\geq 2$ for all $1\leq i\leq \kappa$ in the periodic tree $\TT_\kappa=(a_1,\dots, a_\kappa)$, then each birth event in the infected frontier will cause the size $|\xi^0_t\cap F(A_t)|$ to increase by 1. If we only assume $a_1a_2\cdots a_\kappa\neq 1$, then we need $\kappa$ consecutive birth events in the frontier to ensure that the size $|\xi^0_t\cap F(A_t)|$ increases by 1. 
\end{remark}

\section{Continuous survival probability}

In the branching process $Z_t$ in the proof of Lemma \ref{BPcomparison}, a particle can only give birth within time $M_1$, which means we are only looking at a finite time ``block" in the corresponding graphical representation. We drop the superscript $0$ and 
let $\xi^{\lambda}_t$ denote the contact process with infection rate $\lambda$ and recovery rate 1 starting from the root infected. Within this ``block" the process $\xi^{\lambda}_t$ and $\xi^{\lambda'}_t$ should behave very much the same if $\lambda'$ is sufficiently close to $\lambda$. If $\xi^{\lambda}_t$ survives with positive probability, tuning down the infection rate $\lambda$ slightly to $\lambda'$ the resulting process $\xi^{\lambda'}_t$ should still behave similarly in the finite time ``block", which then implies the survival of $\xi^{\lambda'}_t$. We will follow this idea to prove both results in this section. A superscript $\lambda$ is used to denote the infection rate. For example, $\tau^{\lambda}_k$ is the corresponding hitting time of the event $\{|\xi^{\lambda}_t\cap F(A_t)|\geq k\}$.

\begin{lemma}\label{lambda1}
The contact process  on $\GW(D)$ dies out at  $\lambda_1(\GW(D))$.
\end{lemma}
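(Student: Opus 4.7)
The plan is to argue by contradiction using a block argument that piggybacks on the branching-process comparison of Lemma \ref{BPcomparison}. Assume $\rho := P_{\lambda_1}(\Omega_\infty) > 0$; I will exhibit some $\lambda' < \lambda_1$ with $P_{\lambda'}(\Omega_\infty) > 0$, contradicting the definition of $\lambda_1$. The key observation is that the CMJ process built in Lemma \ref{BPcomparison} is driven entirely by events that are bounded in both time (within the fixed window $[0, M_1 + 1]$) and, for each realization of the tree, in space, hence continuous in $\lambda$.

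At $\lambda = \lambda_1$, Lemma \ref{taukfinite} gives $P_{\lambda_1}(\tau_k^{\lambda_1} < \infty) \geq \rho$ for every $k$. Choose $k$ large enough that $k\, e^{-2}(1 - e^{-\lambda_1})\, \rho/4 > 2$, and then choose $M_1 < \infty$ large enough that $p_{\lambda_1} := P_{\lambda_1}(\tau_k^{\lambda_1} < M_1) \geq \rho/2$. With these choices, the GW skeleton of the CMJ process of Lemma \ref{BPcomparison} is supercritical at $\lambda_1$ with a definite safety margin in its offspring mean.

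To transfer the supercriticality to some $\lambda' < \lambda_1$, fix $\TT \in \GW(D)$ and note that $|A_{M_1+1}^{\lambda_1}|$ is almost surely finite on $\TT$. Hence for each $\delta > 0$ there exists $R = R(\TT, \delta) < \infty$ with $P_{\TT, \lambda_1}(A_{M_1+1}^{\lambda_1} \not\subset B(o, R)) < \delta$. Couple the rate-$\lambda_1$ and rate-$\lambda'$ graphical representations on the cylinder $B(o, R) \times [0, M_1+1]$ by Poisson thinning; the expected number of infection arrows on which the two processes disagree is at most $(\lambda_1 - \lambda')(M_1+1)|E(B_R(\TT))|$, which is $\leq \delta$ once $\lambda_1 - \lambda'$ is sufficiently small. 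On the intersection of the ``no disagreement in $B(o,R)$'' and ``contact process stays in $B(o,R)$'' events the two trajectories coincide on $[0, M_1+1]$, so the block event $\{\tau_k < M_1\}$ together with the subsequent one-time-unit infection events of Lemma \ref{BPcomparison} either holds for both rates or fails for both. This yields $|P_{\TT, \lambda'}(\tau_k^{\lambda'} < M_1) - P_{\TT, \lambda_1}(\tau_k^{\lambda_1} < M_1)| \to 0$ as $\lambda' \uparrow \lambda_1$, and integrating over $\TT \sim \mu$ by dominated convergence (the integrand is bounded by $1$) gives $\lim_{\lambda' \uparrow \lambda_1} P_{\lambda'}(\tau_k^{\lambda'} < M_1) = p_{\lambda_1}$. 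Thus for some $\lambda' < \lambda_1$, $P_{\lambda'}(\tau_k^{\lambda'} < M_1) \geq \rho/4$.

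Rerunning the CMJ construction of Lemma \ref{BPcomparison} with $\lambda'$ in place of $\lambda$, the expected number of valid offspring is at least $k\, e^{-2}(1 - e^{-\lambda'})\, \rho/4$, which exceeds $1$ by the safety margin chosen above together with the continuity of $e^{-\lambda}$. Consequently $\xi^{\lambda'}$ dominates a supercritical branching process, giving $P_{\lambda'}(\Omega_\infty) > 0$ and contradicting $\lambda' < \lambda_1$. The main obstacle I anticipate is the localization-coupling step: one must control the block event on a finite ball $B(o,R)$ whose size depends on the random tree $\TT$, then combine pointwise-in-$\TT$ continuity with annealed averaging. Beyond handling that dependence carefully via finite-speed propagation in a finite time window plus dominated convergence under $\mu$, the argument is a clean continuity-plus-comparison and requires no new probabilistic ingredient.
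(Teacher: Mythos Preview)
Your proposal is correct and follows essentially the same route as the paper: assume survival at $\lambda_1$, build the supercritical CMJ/GW comparison of Lemma~\ref{BPcomparison} with a safety margin in the offspring mean, then use continuity of the finite-time block event $\{\tau_k<M_1\}$ in $\lambda$ to transfer supercriticality to some $\lambda'<\lambda_1$, contradicting the definition of $\lambda_1$. The only difference is cosmetic: the paper asserts the coupling bound $P(\xi^{\lambda}_t\neq\xi^{\lambda-\delta}_t\text{ for some }t\le\min\{\tau_k^\lambda,M_1\})<\ep$ without elaboration, whereas you spell it out via localization to a ball $B(o,R(\TT,\delta))$, Poisson thinning on finitely many edges, and dominated convergence over $\TT\sim\mu$; and you use $\rho/4$ where the paper uses $\rho/2-\ep$.
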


\begin{proof}
To prove the conclusion it suffices to show $P_{\lambda}(\xi^0_t\neq \varnothing \text{ for all }t\geq 0)>0$ implies $\lambda>\lambda_1$. Let $\rho:=P_\lambda(\xi^0_t\neq\varnothing \text{ for all }t\geq 0)>0$. For reasons that will become clear later we choose $k, M_1$ so that 
\beq\label{choice}
ke^{-2}(1-e^{-\lambda})\rho/2>2 \quad \text{and}\quad P(\tau^{\lambda}_k< M_1)\geq \rho/2.
\eeq
For a small $\delta>0$ we have 
$$P(\tau^{\lambda-\delta}_k<M_1)\geq P(\{\xi^{\lambda}_t= \xi^{\lambda-\delta}_t \text{ for all }t\leq \min\{\tau^{\lambda}_k,M_1\}\}\cap \{\tau^{\lambda}_k<M_1\}).$$
For any $\ep>0$, when $\delta$ is sufficiently small we have 
$$P(\xi^{\lambda}_t \neq \xi^{\lambda-\delta}_t \text{ for some }t\leq \min\{\tau^{\lambda}_k,M_1\})<\ep.$$
Hence,
$$P(\tau^{\lambda-\delta}_k < M_1)\geq 1-\ep-(1-\rho/2)=\rho/2-\ep.$$

Following the proof of Lemma \ref{BPcomparison} we can construct a branching process $Z_t$ dominated by $\xi^{\lambda-\delta}_t$, where a particle gives birth to $X\sim Binomial(k, e^{-2}(1-e^{-\lambda-\delta}))$ offspring if $\tau^{\lambda-\delta}_k<M_1$, and to no offspring if $\tau^{\lambda-\delta}_k\geq M_1$. When $\ep$ and $\delta$ are sufficiently small, by the choice of $k$ in (\ref{choice}) the mean number of offspring is
$$EX \geq ke^{-2}(1-e^{-\lambda-\delta})(\rho/2-\ep)>1.$$
Hence $Z_t$ is supercritical and survives with positive probability, which implies $\xi^{\lambda-\delta}_t$ survives with positive probability. So $\lambda>\lambda-\delta \geq \lambda_1$ and this completes the proof.
\end{proof}

Let $p(\lambda):=P_\lambda(\xi^0_t\neq \varnothing \hh \forall t\geq 0)$ be a function of $\lambda$ that represents the survival probability of the contact process $\xi^\lambda_t$.
\begin{lemma}\label{continuity}
$p(\lambda)$ is continuous on $[0,\infty)$.
\end{lemma}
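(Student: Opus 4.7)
The plan is to prove monotonicity, right-continuity, and left-continuity of $p$ separately. Monotonicity is immediate from the standard graphical coupling in which, for $\lambda'<\lambda$, infection marks are thinned so that $\xi^{\lambda'}_t\subseteq \xi^\lambda_t$; hence $p$ is non-decreasing and one-sided limits exist everywhere. For right-continuity, set $p_M(\lambda):=P_\lambda(\xi^0_M\neq\varnothing)$. A standard coupling on a fixed realization $\TT$ (share Poisson arrivals at a dominant rate and vary the acceptance probabilities) shows that $P_{\TT,\lambda}(\xi^0_M\neq\varnothing)$ is continuous in $\lambda$ for $\mu$-a.e.\ $\TT$, and dominated convergence lifts this to the annealed $p_M$. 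Since $\{\xi^0_M\neq\varnothing\}\downarrow \Omega_\infty$, we have $p_M\downarrow p$, making $p$ upper semi-continuous; a monotone upper semi-continuous function on $\mathbb{R}$ is automatically right-continuous.

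For left-continuity, the case $\lambda\le\lambda_1$ is trivial because $p\equiv 0$ on $[0,\lambda_1]$ by Lemma \ref{lambda1}. Fix $\lambda>\lambda_1$, set $\rho:=p(\lambda)>0$, and let $\varepsilon\in(0,\rho)$. The idea is a quantitative sharpening of the block argument from Lemma \ref{lambda1}. Let $\tau^\lambda_N:=\inf\{t:|\xi^\lambda_t\cap F(A_t)|\ge N\}$. By Lemma \ref{taukfinite}, $\tau^\lambda_N<\infty$ almost surely on $\Omega_\infty$, so for $T$ sufficiently large, $P(\tau^\lambda_N\le T)\ge \rho-\varepsilon/4$. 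The coupling of $\xi^\lambda$ and $\xi^{\lambda-\delta}$ used in Lemma \ref{lambda1} then gives, for all sufficiently small $\delta>0$, $P(\xi^\lambda_t=\xi^{\lambda-\delta}_t \text{ for every } t\le T)\ge 1-\varepsilon/4$, and hence $P(\tau^{\lambda-\delta}_N\le T)\ge \rho-\varepsilon/2$. At time $\tau^{\lambda-\delta}_N$, each of the $N$ infected frontier vertices has an unexplored Galton-Watson subtree; by the decoupling used in Section \ref{expgrowth}, restricting $\xi^{\lambda-\delta}$ to each such subtree yields $N$ conditionally independent contact processes, each with survival probability $q:=p(\lambda-\delta)$. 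Since survival of any one of them implies survival of $\xi^{\lambda-\delta}$,
\[
q \;\ge\; (\rho-\varepsilon)\bigl(1-(1-q)^N\bigr).
\]

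The remaining task is to extract a useful quantitative lower bound on $q$ from this implicit inequality. Put $g(q):=q-(\rho-\varepsilon)(1-(1-q)^N)$; then $g(0)=0$, $g'(0)=1-N(\rho-\varepsilon)<0$ once $N>1/(\rho-\varepsilon)$, and $g(1)>0$, so $g$ has a unique positive root $q^*\in(0,\rho-\varepsilon)$, and the inequality $g(q)\ge 0$ forces either $q=0$ or $q\ge q^*$. Because $\lambda-\delta>\lambda_1$ for $\delta$ small, $q=p(\lambda-\delta)>0$, and so $q\ge q^*$. A direct check shows $q^*$ is increasing in $N$ with $(1-q^*)^N\to 0$, hence $q^*\to \rho-\varepsilon$; choosing $N$ large enough that $q^*\ge \rho-2\varepsilon$ yields $p(\lambda-\delta)\ge \rho-2\varepsilon$ for every sufficiently small $\delta>0$. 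Letting $\varepsilon\downarrow 0$ gives $\liminf_{\delta\downarrow 0}p(\lambda-\delta)\ge \rho$, which together with monotonicity finishes left-continuity. The main obstacle is extracting the quantitative lower bound from the implicit self-consistency inequality; it is precisely for this purpose that the frontier count $N$ is introduced as an additional tuning parameter beyond the time horizon $T$ used in Lemma \ref{lambda1}.
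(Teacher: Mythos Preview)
Your right-continuity argument coincides with the paper's. For left-continuity the paper takes a different route that sidesteps the self-consistency inequality entirely: it fixes an auxiliary $\lambda_0\in(\lambda_1,\lambda)$, builds the supercritical CMJ comparison at rate $\lambda_0$, and uses the \emph{fixed} constant $\sigma:=e^{-2}(1-e^{-\lambda_0})P(W_0>0)$ as the per-subtree survival probability. This yields directly
\[
p(\lambda')\;\ge\;\bigl((1-\delta)p(\lambda)-\delta\bigr)\bigl(1-(1-\sigma)^m\bigr),
\]
with $\sigma$ independent of $\lambda'$, so no implicit equation needs to be analyzed---one simply takes $m$ large and then $\delta$ small. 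Your approach instead feeds $q=p(\lambda-\delta)$ back into the lower bound and extracts the conclusion from the fixed-point structure of $g(q)=q-(\rho-\varepsilon)(1-(1-q)^N)$. Both arguments work; the paper's is shorter, while yours is more self-contained in that it does not need the auxiliary $\lambda_0$ or the CMJ limit variable $W_0$.

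One imprecision worth fixing: the $N$ infected frontier vertices $x$ lie in $A_t$, but the unexplored Galton--Watson tree is $S(x')$ for a child $x'\in B_t$ that has \emph{not} yet been infected. So ``restricting $\xi^{\lambda-\delta}$ to $S(x')$'' gives an empty process, and restricting to $\{x\}\cup S(x')$ yields a process on a root-added tree whose annealed survival probability is not $p(\lambda-\delta)$. As in Section~\ref{expgrowth}, you should first push the infection from $x$ to $x'$ at cost a factor such as $c:=e^{-2}(1-e^{-\lambda/2})>0$ (uniform in small $\delta$), obtaining $q\ge(\rho-\varepsilon)\bigl(1-(1-cq)^N\bigr)$. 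Your fixed-point analysis then goes through unchanged: $g$ is still convex with $g'(0)<0$ once $N>1/(c(\rho-\varepsilon))$, and for any $a<\rho-\varepsilon$ one has $g(a)<0$ for large $N$, so $q^*_N\to\rho-\varepsilon$ and the conclusion follows.
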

\begin{proof}
The right continuity of $p(\lambda)$ is immediate since 
$$P_\lambda(\xi^0_t\neq \varnothing) \downarrow p(\lambda)$$
as $t\uparrow \infty$ and $P_\lambda(\xi^0_t\neq \varnothing)$ is increasing and continuous in $\lambda$ for each $t$. 

We have proved that $p(\lambda)\equiv 0$ on $[0,\lambda_1]$, so it remains to prove the left continuity for $p(\lambda)$ on $(\lambda_1,\infty)$. Take $\lambda>\lambda_1$, which trivially gives $p(\lambda)>0$. We will show for any $\ep>0$, there exists $\lambda'<\lambda$ such that $p(\lambda')>p(\lambda)-\ep$. 

Since $\lambda>\lambda_1$ there exists $\lambda_0$ such that $\lambda>\lambda_0>\lambda_1$. It follows from the same argument as in the proof of Lemma \ref{lambda1} that we can construct a supercritical branching process $Z_t$ that is dominated by $\xi^{\lambda_0}_t$. Moreover, there exists some constant $c>0$ and a non-negative random variable $W_0$ with $P(W_0>0)>0$ such that 
$$\frac{Z_t}{e^{ct}}\to W_0\quad \text{ as }t\to\infty.$$

To simplify notation we will write $\sigma=e^{-2}(1-e^{-\lambda_0})P(W_0>0)$. For a given $\delta>0$ we will choose $m$ so that $(1-\sigma)^m<\delta$. Since $\tau^\lambda_m<\infty$ almost surely on $\Omega_\infty\equiv\{\xi^\lambda_t\neq \varnothing \hh \forall t\geq 0\}$, we can choose $M_2$ sufficiently large so that 
\beq\label{error1}
P(\tau^{\lambda}_m\leq M_2)\geq (1-\delta)p(\lambda).
\eeq
When $\lambda'<\lambda$ is sufficiently close to $\lambda$ we also have 
\beq\label{error2}
P(\xi^{\lambda}_t \neq \xi^{\lambda'}_t \text{ for some }t\leq M_2)<\delta.
\eeq

Without loss of generality we can assume $\lambda'>\lambda_0$. Observe that each vertex in the frontier $\xi^{\lambda'}_{t} \cap F(A_{t})$ can start an independent branching process with probability $\geq e^{-2}(1-e^{-\lambda_0})$ and each branching process lives forever with probability at least $P(W_0>0)$ since $\lambda'>\lambda_0$. On the event $\{\xi^{\lambda}_t=\xi^{\lambda'}_t \text{ for }t\leq M_2\}\cap\{ \tau^{\lambda}_m\leq M_2\}$, the process $\xi^{\lambda'}_t$ can give birth to $X\sim Binomial(m, \sigma)$ branching processes that survive. If $X\geq 1$ then the process $\xi^{\lambda'}_t$ survives since it dominates a surviving branching process. Hence
\begin{align}\label{surv}
p(\lambda')
&\geq P\left(\{\xi^{\lambda}_t=\xi^{\lambda'}_t \text{ for }t\leq M_2\}\cap\{ \tau^{\lambda}_m\leq M_2\}\right)P(Binomial(m, \sigma)\geq 1)
\end{align}
It follows from (\ref{error1}) and (\ref{error2}) that when $\lambda'$ is sufficiently close to $\lambda$
$$P\left(\{\xi^{\lambda}_t=\xi^{\lambda'}_t \text{ for }t\leq M_2\}\cap\{ \tau^{\lambda}_m\leq M_2\}\right)\geq (1-\delta)p(\lambda)-\delta,$$
which then gives
$$
p(\lambda')\geq ((1-\delta)p(\lambda)-\delta)(1-(1-\sigma)^m)\geq ((1-\delta)p(\lambda)-\delta)(1-\delta).
$$
Therefore, for any given $\ep>0$ there exists $\delta$ sufficiently small and $\lambda'$ sufficiently close to $\lambda$ so that $p(\lambda')>p(\lambda)-\ep$. 
\end{proof}

\begin{remark}
The same construction can be carried out on periodic trees straightforwardly, thus proving the continuity of survival probability $p(\lambda)$ for the contact process on periodic trees. As a consequence, the contact process dies out at $\lambda_1$ on periodic trees.
\end{remark}



\section{No strong survival at $\lambda_2$} 
We start by introducing some new notation. Let $\GW^+(D)$ be the set of all realizations of the ``root-added" Galton-Watson tree in which the root 0 has offspring number 1 and its only offspring is the root of a Galton-Watson tree with offspring distribution $D$. A realization in $\GW^+(D)$ is denoted by $\TT^+$ and the measure over the realizations by $\nu$.

Define the measure $P_{\lambda, +}$ to be
$$P_{\lambda,+}(\hh\cdot\hh)=E_{\TT^+\sim \nu}(P_{\TT^+,\lambda}(\hh\cdot\hh))=\sum_{\TT^+ \in \GW^+(D)}P_{\TT^+,\lambda}(\hh\cdot\hh)\nu(\TT^+).$$
We will drop the index $\lambda$ when the context is clear. The contact process on $\GW^+(D)$ with with root 0 initially infected will be denoted by $\eta^0_t$ while the contact process on $\GW(D)$  will still be denoted by $\xi^0_t$.

From now on we set $A_t=\cup_{0\leq s\leq t}\hh \eta^0_s$. Let $|v|$ denote the distance from vertex $v$ to the root 0. For a vertex $x\in \eta^{0}_{t}\cap F(A_{t})$, define a \textit{branch} of $x$ to be
$$S^+(x)=\{x\}\cup S(x'),$$
where $x'$ can be any child of $x$ that has not been infected by time $t$. Observe that $S^+(x)$ follows the distribution of  $\GW^+(D)$. 

\begin{lemma}\label{expsize}
If $\lambda>\lambda_1$, there exists $\alpha,\beta,\delta>0$ and $t_1$ such that for all $j\geq 1$, 
$$P_{\lambda,+}( |\eta^{0}_{jt_1}\cap F(A_{jt_1})\cap \{v: j \alpha t_1\leq |v|\leq j \beta t_1\}|\geq e^{jc_\lambda t_1} \})\geq \delta,$$
where $c_\lambda$ is a positive number which may depend on $\lambda$.
\end{lemma}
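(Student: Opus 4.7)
\mn
My approach is to prove the $j=1$ case (a ``one-step'' estimate on $\GW^+(D)$) and then iterate via a supercritical discrete-time Galton-Watson branching process with time-step $t_1$, whose generation-$j$ particles correspond to frontier vertices of $\eta^0_{jt_1}$ in the annulus $\{j\alpha t_1\leq|v|\leq j\beta t_1\}$.

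\mn
For the one-step estimate I would apply Lemma \ref{BPcomparison} on $\GW^+(D)$ to obtain a supercritical CMJ process $Z_t$ with Malthusian parameter $c>0$, satisfying $Z_t/e^{ct}\to W$ a.s.\ with $P(W>0)>0$. The key geometric observation is that along any line of CMJ descent the tree depth strictly increases (each CMJ child lies in the subtree of its CMJ parent), so the tree depth of a CMJ particle is at least its CMJ generation. The upper bound $|v|\leq\beta t_1$ follows from the standard linear-speed-of-spread estimate: for $\beta=\beta(\lambda)$ large enough, $P(\max_{v\in\eta^0_t}|v|>\beta t)\leq e^{-\kappa t}$, obtained by comparing infection propagation along a fixed path with a Poisson/renewal process. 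The lower bound $|v|\geq\alpha t_1$ comes from a CMJ Malthusian generation estimate: writing $m>1$ for the mean CMJ offspring and $T_k$ for a sum of $k$ iid CMJ inter-birth times, one has $E\,\#\{\text{gen}=k\}_t = m^k\,P(T_k\leq t)$, so summing over $k\leq\alpha t$ with $\alpha$ chosen small enough that $\alpha\log m<c$ gives $E\,\#\{\text{gen}\leq\alpha t\}_t = O(m^{\alpha t})=o(e^{ct})$; Markov's inequality then shows that most CMJ particles at time $t$ have CMJ generation, hence tree depth, at least $\alpha t$.

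\mn
For the frontier condition I would restrict to CMJ particles born in a short window $[t_1-\Delta,t_1]$ with $\Delta>0$ a fixed constant. By the limiting CMJ age distribution $(\propto e^{-cs}\,ds)$ a positive fraction of $Z_{t_1}$ is born in this window, and each such $y$ born at time $s\in[t_1-\Delta,t_1]$ remains in $F(A_{t_1})$ with conditional probability at least $e^{-\lambda\Delta}$, since it suffices that one of $y$'s child-edges fail to fire during $(s,t_1]$. A Chebyshev-type second-moment argument exploiting the quasi-independence of post-birth dynamics in disjoint subtrees converts this in-expectation bound into an actual count $\geq c''\,e^{ct_1}$ with probability $\geq\delta_1>0$. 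Combining these four events (Malthusian count, speed bound, generation bound, frontier membership) yields the one-step statement.

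\mn
Finally I would iterate: pick one disjoint branch $S^+(x')$ per one-step frontier vertex $v$, each distributed like $\GW^+(D)$ and carrying an independent contact process from $v$, and reapply the one-step estimate inside each branch. This embeds a supercritical discrete-time Galton-Watson branching process with mean offspring $\geq c''\delta_1\,e^{ct_1}$, which exceeds $1$ for $t_1$ large. Its generation-$j$ size $N_j$ satisfies $N_j/m^j\to W'$ a.s.\ with $P(W'>0)>0$, and choosing $c_\lambda<c$ (to absorb the prefactor $(\log(c''\delta_1))/t_1$) gives $P(N_j\geq e^{c_\lambda jt_1})\geq\delta$ uniformly in $j\geq 1$. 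The cumulative distance constraints $j\alpha t_1\leq|v|\leq j\beta t_1$ hold automatically because each iteration contributes a depth increment in $[\alpha t_1,\beta t_1]$. The main obstacle is the frontier step: coordinating ``recent birth'', ``deep CMJ generation'', and ``frontier membership'' jointly for exponentially many particles while preserving a uniform positive lower bound on the probability, which is what forces the use of the second-moment argument rather than a direct expectation bound.
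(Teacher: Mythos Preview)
Your proposal is essentially correct and the iteration step matches the paper's closely, but your one-step estimate is considerably more elaborate than necessary, and this is worth pointing out.

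\textbf{The annulus restriction.} The paper does not derive $\alpha$ and $\beta$ from first principles. Instead, it first fixes $t_1$ large enough that (from the proof of Lemma~\ref{BPcomparison}) $P_+(|\eta^0_{t_1}\cap F(A_{t_1})|\geq e^{ct_1})\geq \tfrac34\theta$, and \emph{then} simply chooses $\alpha$ small and $\beta$ large so that intersecting with $\{\alpha t_1\leq|v|\leq\beta t_1\}$ loses at most $\theta/4$ of this probability. This is trivial monotone convergence for a single fixed $t_1$; no speed-of-spread bound and no CMJ generation counting is needed. Your argument via linear speed and Malthusian generation estimates also works and gives $\alpha,\beta$ independent of $t_1$, but that extra uniformity is never used.

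\textbf{Frontier membership.} The paper simply invokes the proof of Lemma~\ref{BPcomparison} to get exponential growth of $|\eta^0_{t_1}\cap F(A_{t_1})|$ directly, whereas you reconstruct this via recent-birth CMJ particles, age distribution, and a second-moment argument. Your route is a legitimate way to make that invocation rigorous. One small gap: for $y$ born at time $s$ to lie in $\eta^0_{t_1}\cap F(A_{t_1})$ you need both that a child edge of $y$ does not fire \emph{and} that $y$ itself does not recover on $(s,t_1]$; the conditional probability should be at least $e^{-(1+\lambda)\Delta}$, not $e^{-\lambda\Delta}$. This is harmless.

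\textbf{Iteration.} Both arguments embed a supercritical branching process in disjoint branches $S^+(x)$. You appeal to the martingale limit $N_j/m^j\to W'$; the paper instead uses Chebyshev at each level to get $P(N_{i+1}\leq(\delta e^{ct_1}/2)^{i+1}\mid N_i\geq(\delta e^{ct_1}/2)^i)\leq((4/\delta)e^{-ct_1})^{i+1}$ and then bounds the product $\prod_{i\geq 1}(1-((4/\delta)e^{-ct_1})^i)$ from below by a positive constant. Either works; the paper's version is more elementary and gives an explicit uniform lower bound $\delta\sigma$ directly.
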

\begin{proof}
Let $\theta=P_{+}(\eta^0_t\neq \varnothing \text{ for all }t\geq 0)$. It follows from the proof of Lemma \ref{BPcomparison} that there exists $c>0$ such that when $t_1$ is sufficiently large
$$P_{+}(|\eta^{0}_{t_1}\cap F(A_{t_1})|\geq e^{c t_1})\geq \frac{3}{4}\theta.$$
For reasons that will become clear later, we choose $t_1$ large enough so that the following also holds
\beq\label{chooset}
(\theta e^{ct_1})/8>1
\eeq
Given the choice of $t_1$ we can then choose $\alpha$ small and $\beta$ large so that 
\beq\label{good}
P_{+}(|\eta^{0}_{t_1}\cap F(A_{t_1})\cap\{v: \alpha t_1\leq |v|\leq \beta t_1\}|\geq e^{c t_1})\geq \frac{1}{2}\theta\equiv \delta.
\eeq

A vertex $x$ is said to be \textit{good} if the event in (\ref{good}) with probability $\delta$ occurs for the contact process on $S^+(x)$ with $x$ initially infected. If the root 0 is good, we say the root is a good vertex at level 0. The good vertices in $\eta^{0}_{jt_1}\cap F(A_{jt_1})\cap\{v: j\alpha t_1\leq |v|\leq j\beta t_1\} $ are called the good vertices at level $j$. Let $N_j$ be the number of good vertices at level $j$. Trivially we have 
$$|\eta^0_{jt_1}\cap F(A_{jt_1})\cap\{ v: j\alpha t_1\leq |v|\leq j\beta t_1\} |\geq N_j.$$
Hence it suffices to estimate $N_j$. Note that the good vertices in $\eta^{0}_{t_1}\cap F(A_{t_1})\cap \{v: \alpha t_1\leq |v| \leq \beta t_1\}$ correspond to disjoint branches, so $N_1$ dominates a $Binomial(e^{c t_1}, \delta)$ random variable if the root 0 is good. 

For a random variable $X\sim Binomial(n,p)$, Chebyshev's inequality gives
\beq\label{chebyshev}
P\left(X\leq \frac{np}{2}\right)\leq \frac{Var(X)}{ (np/2)^2}\leq \frac{4}{np}.
\eeq
By (\ref{chebyshev}) with $n=e^{ct_1}$ and $p=\delta$,
\begin{align*}
P\left(N_1\leq \frac{\delta e^{c t_1}}{2}\bigg| N_0=1\right)&\leq P\left(Binomial(e^{c t_1}, \delta)\leq  \frac{\delta e^{c t_1}}{2}\right)\\
&\leq (4/\delta)e^{-c t_1},
\end{align*}
where $(4/\delta)e^{-c t_1}<1$ by (\ref{chooset}). The branches of good vertices at level 1 are disjoint, so 
 the random variable $N_2$ dominates $Binomial(N_1 e^{c t_1}, \delta)$, which gives
\begin{align*}
P\left(N_2\leq \left(\frac{\delta e^{c t_1}}{2}\right)^2\bigg|N_1\geq \frac{\delta e^{c t_1}}{2}\right)& \leq P\left(Binomial\left(\frac{\delta e^{2c t_1}}{2},\delta\right)\leq  \left(\frac{\delta e^{c t_1}}{2}\right)^2\right)\\
&\leq 2\left(\frac{2e^{-ct_1}}{\delta}\right)^2\leq \left((4/\delta)e^{-ct_1}\right)^2.
\end{align*}

It follows from the Markov property that $N_{i+1}$ depends only on $N_i$ for $i\geq 0$. Hence, for any $j\geq 1$
\begin{align*}
P\left(N_j \geq  \left(\frac{\delta e^{c t_1}}{2}\right)^j \right)&\geq \delta P\left(N_1\geq \frac{\delta e^{ct_1}}{2}\right)\prod_{i=1}^{j-1} P\left( N_{i+1}\geq  \left(\frac{\delta e^{c t_1}}{2}\right)^{i+1} \bigg| N_{i}\geq  \left(\frac{\delta e^{c t_1}}{2}\right)^{i}\right) \\
&\geq \delta \prod_{i=1}^j \left(1-\left((4/\delta)e^{-ct_1}\right)^i\right )\geq \delta\sigma
\end{align*}
where $\sigma \equiv \prod_{i=1}^\infty \left(1-((4/\delta)e^{-ct_1})^i\right)>0$.
Since $(\theta e^{ct_1})/8>1$ there exists $0<c_\lambda<c$ such that 
$$e^{c_\lambda t_1}<2 < \frac{\theta e^{ct_1}}{4}= \frac{\delta e^{c t_1}}{2}.$$
The last equality follows from the definition $\delta=\theta/2$.
Hence for any $j\geq 1$, $P(N_j \geq  e^{jc_\lambda t_1})\geq \delta\sigma>0$.
\end{proof}
 
We are now ready to prove Theorem \ref{lambda2}, i.e., the contact process on $\GW(D)$ does not survive strongly at $\lambda_2(\GW(D))$.

\mn\textit{Proof of Theorem \ref{lambda2}.}
Since $P_{\lambda_2}(0\in \xi^0_t \text{ i.o.})=0$ if and only if $P_{\lambda_2,+}(0\in \eta^0_t \text{ i.o.})=0$, it suffices to show the latter. Our goal is to show that $P_{\lambda,+}(0\in \eta^0_t \text{ i.o.})>0$ implies $\lambda>\lambda_2$.
Now suppose $P_{\lambda, +}(0\in \eta^{0}_t \text{ i.o.})>0$. Theorem \ref{lambda1} implies that $\lambda>\lambda_1$, so there exists $\lambda_0$ such that $\lambda>\lambda_0>\lambda_1$. Applying Lemma \ref{expsize} to the contact process with infection rate $\lambda_0$, there exists positive $c_{\lambda_0}$ so that  for all $j\geq 1$,
\beq\label{bj}
P_{\lambda_0,+}(|\eta^{0}_{jt_1}\cap F(A_{jt_1})\cap \{v: j\alpha t_1\leq |v|\leq j\beta t_1\}|\geq e^{jc_{\lambda_0} t_1})\geq \delta
\eeq
for some choice of $\alpha, \beta,\delta$ and $t_1$.

Let $B_j=\eta^0_{jt_1}\cap F(A_{jt_1})\cap\{v: j\alpha t_1\leq |v|\leq j\beta t_1\}$ and let $x$ be a vertex in $B_j$. We use the notation $(x,0)\to (0,t)$ to denote the event that running the contact process on the shortest path connecting $x$ and $0$ with $x$ initially infected, $0$ will be infected at time $t$. Trivially, if every particle in the direct path from $x\in B_j$ to 0 lives for at least $2/\alpha$ and pushes the infection toward the root 0 between time $[1/\alpha, 2/\alpha]$ then
$$P_{\lambda_0}( (x,0)\to (0,t) \text{ for some $t\in[jt_1, (2\beta/\alpha)j t_1]$})\geq \big(e^{-2/\alpha}(e^{-\lambda_0/\alpha}-e^{-2\lambda_0/\alpha})\big)^{j\beta  t_1}\equiv (q_0)^{j\beta t_1}.$$
Fix $\ep=(c_{\lambda_0}/2)$ and $\rho=\frac{1}{4}P_{\lambda,+}(0\in \eta^0_t \text{ i.o.})$. For reasons that will become clear later we choose $j_0$ such that 
\beq\label{choosej}
C_{j_0}\equiv \delta \rho  e^{(c_{\lambda_0}-\varepsilon) j_0 t_1}>1.
\eeq
From now on we write $t_0\equiv j_0t_1$. We want to use the vertices in $B_{j_0}$ to push the infection back to the root 0. For some vertex $x\in B_{j_0}$, let $\eta^x_t$ denote the contact process restricted to the branch $S^+(x)$ starting with $x$ infected. If $x\in \eta^x_t$ for many times, then the infection can try many times to push back to the root. Hence the probability of successfully pushing the infection from $x$ back to 0 is increased if $x\in \eta^x_t$ for many times. To estimate the success probability let $\sigma^{x}_0=0$ and define $$\sigma^{x}_{i+1}=\inf\{ t>\sigma_i+(2\beta/\alpha)t_0: x\in \eta^x_t\}.$$
Choose $m$ so that 
\beq\label{push}
1-(1-(q_0)^{\beta t_0})^m \geq e^{-\ep t_0},
\eeq
which is a lower bound on the probability that $x$ successfully pushes the infection to the root 0 within $m$ independent trials. If the process $\eta^x_t$ restricted to $S^+(x)$ reinfects $x$ for infinitely many times then $\sigma^x_m<\infty$, i.e.,
$P_{\lambda,+}(\sigma^x_m<\infty) \geq P_{\lambda,+}(0\in \eta^0_t \text{ i.o.})$.
So we can choose $M_2$ such that 
\beq\label{recurrent}
P_{\lambda,+}(\sigma^x_m\leq M_2) \geq \frac{1}{2}P_{\lambda,+}(0\in \eta^0_t \text{ i.o.})= 2\rho.
\eeq
For any $\epsilon<\lambda-\lambda_0$, the estimates in (\ref{bj}) through (\ref{push}) are still true for the contact process with infection rate $\lambda-\epsilon$ since $\lambda-\epsilon>\lambda_0$. By (\ref{recurrent}) there exists a sufficiently small $\epsilon>0$ with $\epsilon<\lambda-\lambda_0$ that satisfies
$$P_{\lambda-\epsilon,+}(\sigma^x_m\leq M_2) \geq \rho.$$

A vertex $x$ is said to be \textit{$(m,\lambda-\epsilon)$-recurrent} if $\{\sigma^x_m\leq M_2\}$ occurs for the contact process $\eta^{x}_t$ with infection rate $\lambda-\epsilon$. If $x$ is $(m,\lambda-\epsilon)$-recurrent then with probability $\geq e^{-\varepsilon t_0}$ it can reinfect the root after time $t_0$. Each vertex in $B_{j_0}$ is $(m,\lambda-\epsilon)$-recurrent independently with probability at least $\rho$.

Now we are ready to show $\lambda>\lambda_2$. The following argument is inspired by the proof of Proposition 4.57 in Liggett \cite{Liggett}. Let $r_i=P_{\lambda-\epsilon,+}(\exists\hh t\geq 2it_0 \text{ such that }0\in\eta^0_t)$. To estimate $r_{i+1}$ we first run the contact process up to time $t_0$. According to (\ref{bj}) at time $t_0$ the set of infected frontier $B_{j_0}$ has size $\geq L_0\equiv \exp(c_{\lambda_0}t_0)$ with probability $\delta$. Recall that $\eta^x_t$ is the contact process restricted to the branch $S^+(x)$ starting with $x$ infected. Let 
$$R_x=\{ \exists\hh t\geq 2it_0 \text{ such that }x\in \eta^x_t\} \quad \text{ and }\quad G_x=\{ x \text{ is $(m,\lambda-\epsilon)$-recurrent}\}.$$ 
If for some $x\in B_{j_0}$ the event $R_x\cap G_x$ occurs then $x$ is reinfected after $(2i+1)t_0$ and can push the infection back to the root after $2(i+1)t_0$ with probability at least $e^{-\varepsilon t_0}$. It follows that 
\begin{align}\label{ri}
r_{i+1}
\nonumber &\geq  P_{\lambda-\epsilon,+}\left(\exists x\in B_{j_0} \text{ such that } R_x\cap G_x \text{ occurs}\right)\cdot e^{-\varepsilon t_0}\\
&\geq P_{\lambda_0,+}\big( Binomial(|B_{j_0}|, P_{\lambda_0,+}(R_x\cap G_x))\geq 1\big)\cdot e^{-\varepsilon t_0} \quad\quad \text{(since $\lambda_0<\lambda-\epsilon$).}  
\end{align}
On a given tree $\TT^+$ $R_x$ and $G_x$ are positively correlated, as they are both increasing functions of the infection events and decreasing functions of the recovery events (see Liggett \cite{Liggett} for an account of positive correlation). Thus
$$P_+(R_x\cap G_x)=E_{\TT^+\sim \nu}( P_{\TT^+}(R_x\cap G_x))\geq E_{\TT^+\sim \nu}( P_{\TT^+}(R_x)\cdot P_{\TT^+}(G_x)).$$
In order to estimate $E_{\TT^+\sim \nu}( P_{\TT^+}(R_x)\cdot P_{\TT^+}(G_x))$ we will adopt a normal positive correlation argument. The reader can skip this part and jump straight to (\ref{lowerb}).

Now observe that the root-added Galton-Watson tree $\TT^+$ can be represented by a collection of i.i.d. random variables. Let the random variables $Z_0=1$ and $Z_1\sim D$ be the number of offspring of the added root and the root, respectively. Then we sample a series of i.i.d. random variables $Z_{1,1},\dots, Z_{1,Z_1} \sim D$ to represent the number of offspring for each individual in the first generation. The offspring of each individual in the second generation is then given by the collection $\{Z_{1,i,1}, \dots, Z_{1,i ,Z_{1,i}}\}_{i=1}^{Z_{1}}$, and so on. 

To simplify notation we write the index $(i_1,i_2,\dots,i_n)$ as $\mathbf{i}=(i_1,i_2,\dots,i_n, 0,\dots)\in \mathbb{N}^{\mathbb{N}}$. Hence, given a collection $\vec{Z}=\{Z_{\mathbf{i}}:  \mathbf{i}\in \mathbb{N}^{\mathbb{N}} \}$ of i.i.d. random variables we can construct a Galton-Watson tree accordingly. Note that different $\vec{Z}$'s can be mapped to the same $\TT^+\in\GW^+(D)$. For example, if $Z_1=1$ then we only use the element $Z_{1,1}$ to construct the next generation, ignoring the elements $Z_{1,2}, Z_{1,3}$ and so on. Let $\mu^*$ denote the measure over the realizations of $\vec{Z}$. A function $h$ of $\vec{Z}$ is said to be increasing if for a pair $\vec{Z}^1, \vec{Z}^2$ with $Z^1_{\mathbf{i}}\leq Z^2_{\mathbf{i}}$ for all $\mathbf{i}$ we have $h(\vec{Z}^1)\leq h(\vec{Z}^2)$. It follows from the Harris inequality (see Section 2.2 in \cite{Grimmett}) that $\mu^*$ has positive correlations, that is, $E_{\mu^*}fg\geq E_{\mu^*}f \cdot E_{\mu^*}g$ for all increasing functions $f$ and $g$ with finite second moments. Note that the measure $\nu$ on $\GW^+(D)$ can be obtained from $\mu^*$ by collapsing different $\vec{Z}$'s mapping to the same $\TT^+$, which means, for example,
$$E_{\TT^+\sim \nu}( P_{\TT^+}(R_x))=E_{\vec{Z}\sim \mu^*}(P_{\vec{Z}}(R_x))$$
Both $P_{\vec{Z}}(R_x)$ and $P_{\vec{Z}}(G_x)$ are increasing with respect to $\vec{Z}$, so it follows from the positive correlations of $\mu^*$ that 
\begin{align*}
E_{\TT^+\sim \nu}\left( P_{\TT^+}(R_x)\cdot P_{\TT^+}(G_x)\right)&=E_{\vec{Z}\sim \mu^*}( P_{\vec{Z}}(R_x)\cdot P_{\vec{Z}}(G_x))\geq E_{\vec{Z}\sim \mu^*}( P_{\vec{Z}}(R_x))\cdot E_{\vec{Z}\sim \mu^*}( P_{\vec{Z}}(G_x))\\ 
&=E_{\TT^+\sim \nu}( P_{\TT^+}(R_x))E_{\TT^+\sim \nu}( P_{\TT^+}(G_x))=P_+(R_x)P_+(G_x)
\end{align*}
That is,
\beq\label{lowerb}
P_+(R_x\cap G_x)\geq P_+(R_x)P_+(G_x)\geq r_i \rho.
\eeq

Now we can finish the calculation in (\ref{ri})
\begin{align*}
r_{i+1}\geq P(|B_{j_0}|\geq L_0)P(Binomial(L_0, r_i \rho)\geq 1) e^{-\varepsilon t_0}\geq \delta e^{-\varepsilon t_0} (1-(1-r_i \rho)^{L_0})
\end{align*}
where the term $\delta$ comes from (\ref{bj}).
Set $f(r)= \delta e^{-\varepsilon t_0}(1-(1-r\rho)^{L_0})$. Then $f(0)=0$ and $f(1)<1$. Now the reason for our choice of $j_0$ in (\ref{choosej}) has become clear. Under this choice of $j_0$ we have 
$$f'(0)=\delta \rho  e^{-\varepsilon t_0}  L_0=C_{j_0}>1.$$
It follows that $f$ has a fixed point $r^*\in (0,1)$, i.e., $f(r^*)=r^*$. Now we prove $r_i\geq r^*$ inductively for $i\geq 0$. First we have $r_0=1$. Suppose $r_i\geq r^*$, then the monotonicity of $f$ implies that 
$$r_{i+1}\geq f(r_i)\geq f(r^*)=r^*.$$
Thus we have proved $r_i\geq r^*$ for all $i\geq 0$, which implies that $P_{\lambda-\epsilon,+}(0\in \eta^0_t \text{ i.o.})>0$ and thus $\lambda-\epsilon\geq \lambda_2$. Therefore we have shown $\lambda>\lambda_2$ and this completes the proof. \qed

\section{The weak survival phase on periodic trees}
In this section we consider the contact process on a general periodic tree $\TT_\kappa$. We will prove (i) $\lambda_1<\lambda_2$ (see Theorem \ref{intphase}) and (ii) the contact process does not survive strongly at $\lambda_2$ (see Theorem \ref{pgrowth}, (ii)). The rest of Theorem \ref{pgrowth} can be proved by similar arguments to that of the Galton-Watson trees and hence is omitted here. The reader is referred to the remarks in previous sections for ideas of the proofs.

Recall that the periodic tree $\TT_\kappa=(a_1,a_2,\dots,a_\kappa)$ is arranged in the way that every vertex $x$ in $\TT_\kappa$ has one neighbor above it and $d(x)$ neighbors below it, where $d(x)\in\{a_1,\dots,a_\kappa\}$ is the offspring number of $x$. Throughout the discussion we write $\gamma\equiv (a_1\cdot a_2\cdots a_\kappa)$. 

Without loss of generality, a distinguished vertex $o$ is chosen to be the root and the root is said to be on level 0. Now each vertex can be assigned a \textit{level} according to their position relative to the root $o$. Specifically, we will define the function $\ell:\TT_\kappa\to \ZZ$ so that (i) $\ell(o)=0$ and (ii) for each $x\in \TT_\kappa$, $\ell(y)=\ell(x)-1$ for exactly one neighbor $y$ of $x$, and $\ell(y)=\ell(x)+1$ for the other $d(x)$ neighbors of $x$. For a vertex $x\in\TT_\kappa$, $\ell(x)$ is said to be the level of $x$.

\subsection{The growth profile}
We now follow Liggett \cite{Lig96} and Stacey \cite{Stacey} to define the weight of a vertex $x$ by 
$$w_\rho(x)=\rho^{\ell(x)},$$
where $\rho>0$ is a constant to be specified later. The weight of a set $A$ of vertices is defined to be 
$$w_\rho(A)=\sum_{x\in A}\rho^{\ell(x)}.$$

Take $\{e_n: n\in\ZZ\}$ in $\TT_\kappa$ such that $\ell(e_n)=n$ and $|e_n-e_{n+1}|=1$, where $e_0$ is what we called the root $o$. As many of the things we will discuss are analogous to results in Liggett \cite{Liggett} for the homogeneous tree $\TT^d$, we will use the same notation as in \cite{Liggett}. Throughout the discussion we will use $\xi_t$ to denote the contact process on $\TT_\kappa$ starting with only the root $o$ infected. 

Define
$$u(n)=P( e_n\in \xi_t \text{ for some }t).$$
In order for the infection $\xi_t$ to reach $e_{(n+m)\kappa}$ it has to first reach $e_{n\kappa}$. Thus if we restart the contact process with only $e_{n\kappa}$ infected at the moment $\xi_t$ reaches $e_{n\kappa}$, by the strong Markov property
\beq\label{subadd}
u((m+n)\kappa)\geq u(m\kappa)u(n\kappa).
\eeq
See (4.47) in Liggett \cite{Liggett} for a detailed argument. Hence 
$$\beta(\lambda):=\lim_{n\to\infty} [u(n\kappa)]^{1/n}$$
exists and satisfies 
$$\beta(\lambda)=\sup_n [u(n\kappa)]^{1/n}.$$
It is easy to show that $\beta(\lambda)$ is independent of the type of vertex that is initially infected. 



Let $S(x)$ denote the subtree that contains vertex $x$ and all of its descendants. We will use $\bar{\xi}_t$ to denote the contact process restricted to $S(o)$, starting with only the root $o$ infected. The following can be derived from the proof of Lemma 4.53 in \cite{Liggett}. Hence we only give a sketch of proof here.
\begin{lemma}\label{utbeta}
For any $\lambda$,
$$\lim_{n\to\infty} \left( \sup_t P(e_{n\kappa}\in \bar{\xi}_{t}) \right)^{\frac{1}{n}}=\beta(\lambda).$$
\end{lemma}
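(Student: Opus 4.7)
I plan to follow the proof of Lemma 4.53 in Liggett \cite{Liggett}, adapted to the periodic tree. Set $g(n) := \sup_{t\ge 0} P(e_{n\kappa}\in\bar{\xi}_t)$; the goal is to show $g(n)^{1/n}\to\beta(\lambda)$. The upper bound $\limsup g(n)^{1/n}\le\beta(\lambda)$ is immediate, since $g(n)\le P(e_{n\kappa}\in\bar{\xi}_t\text{ for some }t)\le u(n\kappa)$ and $u(n\kappa)^{1/n}\to\beta(\lambda)$.

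For the lower bound I would first establish the supermultiplicativity $g(n+m)\ge g(n)g(m)$. Picking $t_n^{\ast}$ nearly realizing the sup in $g(n)$, the Markov property at $t_n^{\ast}$ together with the observation that, conditional on $e_{n\kappa}\in\bar{\xi}_{t_n^{\ast}}$, the evolution in $S(e_{n\kappa})$ dominates a fresh copy of $\bar{\xi}$ started from $e_{n\kappa}$, gives
$$g(n+m)\ge P(e_{(n+m)\kappa}\in\bar{\xi}_{t_n^{\ast}+t_m^{\ast}})\ge (g(n)-\varepsilon)(g(m)-\varepsilon).$$
Here I use $S(e_{n\kappa})\cong S(o)$, which holds since $e_{n\kappa}$ sits at the same phase of the $\kappa$-periodic offspring pattern as $o$. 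Letting $\varepsilon\to 0$ and applying Fekete's lemma, $\lim g(n)^{1/n}=\sup_n g(n)^{1/n}$ exists.

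Next I would compare $g(n)$ to the first-passage probability $\bar{u}(n\kappa):=P(\tau_{n\kappa}<\infty)$ in $\bar{\xi}$, where $\tau_{n\kappa}$ is the first time $e_{n\kappa}$ is infected. Because $e_{n\kappa}$ stays infected for at least an independent $\mathrm{Exp}(1)$ time after $\tau_{n\kappa}$, a short Fubini calculation yields $\int_0^T P(e_{n\kappa}\in\bar{\xi}_t)\,dt\ge(1-e^{-1})P(\tau_{n\kappa}\le T-1)$. A standard finite-speed-of-propagation argument supplies a constant $C$ (depending on $\lambda$ and $\max_i a_i$) such that $P(\tau_{n\kappa}\le Cn)\ge\bar{u}(n\kappa)/2$ for all large $n$. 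Taking $T=Cn+1$ then gives $g(n)\ge c\,\bar{u}(n\kappa)/n$ for some $c>0$, whence $\liminf g(n)^{1/n}\ge\lim\bar{u}(n\kappa)^{1/n}$.

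What remains, and is the principal obstacle, is to prove $\lim\bar{u}(n\kappa)^{1/n}=\beta(\lambda)$. The upper bound $\bar{u}(n\kappa)\le u(n\kappa)$ is trivial. For the lower bound, I would observe that any infection path in $\xi$ from $o$ to $e_{n\kappa}$ must pass through $o$, because $o$ separates $e_{n\kappa}$ from $\TT_\kappa\setminus S(o)$. Indexing the successive times at which $o$ becomes (re)infected in $\xi$ and using the graphical representation, each such time initiates a fresh copy of $\bar{\xi}$ on $S(o)$, and a union bound gives $u(n\kappa)\le C_\lambda\,\bar{u}(n\kappa)$ where $C_\lambda$ is the expected number of infections of $o$ in $\xi$. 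In the subcritical and weak-survival regimes $C_\lambda<\infty$, while in the strong-survival regime $\beta(\lambda)=1$ and $\bar{u}(n\kappa)$ is bounded below by a positive constant (since on $S(o)$ the process survives strongly and linear expansion reaches $e_{n\kappa}$ with positive probability), so $\bar{u}(n\kappa)^{1/n}\to 1$ directly. Combining these cases yields the desired identity and completes the proof.
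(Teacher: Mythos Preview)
Your reduction to $\lim_n \bar u(n\kappa)^{1/n}=\beta(\lambda)$ is where the argument breaks down. The inequality $u(n\kappa)\le C_\lambda\,\bar u(n\kappa)$ with $C_\lambda$ the expected number of returns of the full process to $o$ is only useful if $C_\lambda<\infty$, and in the weak-survival regime this is equivalent to $\int_0^\infty P(o\in\xi_t)\,dt<\infty$, which is a nontrivial fact that is normally proved \emph{using} growth-profile results such as the present lemma; invoking it here is circular. Your strong-survival branch has the same defect: you assume $\beta(\lambda)=1$ and that $\bar u(n\kappa)$ is bounded below, but neither statement has been established at this point in the paper (only $\beta(\lambda_1)\le 1/\gamma$ is shown, and that comes \emph{after} this lemma). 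The case split also leaves the critical values $\lambda_1,\lambda_2$ in limbo, whereas the lemma is asserted for all $\lambda$. Separately, the claim $P(\tau_{n\kappa}\le Cn)\ge \bar u(n\kappa)/2$ is not a ``standard finite-speed'' fact: finite speed bounds how far infection can travel in a given time, not how quickly a conditioned hitting time is realized, and on a tree in the survival regime the conditional distribution of $\tau_{n\kappa}$ can have a heavy tail.

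The paper's route avoids all of this by never comparing $\bar u$ with $u$. It introduces $v_r(m\kappa)$, the probability that the \emph{full} process confined to a ball of radius $r$ reaches $e_{m\kappa}$ by time $r$, so that $v_r(m\kappa)\uparrow u(m\kappa)$ as $r\to\infty$. The chain is then run along points $x_0=e_{(n-jm)\kappa},\ldots,x_j=e_{n\kappa}$ chosen deep enough in $S(o)$ (via $jm+r\le n$) that each relevant $r$-ball lies entirely inside $S(o)$; this is precisely what allows factors $v_r(m\kappa)$, built from the unrestricted process, to serve as lower bounds for events of $\bar\xi$. A pigeonhole step converts ``reach $e_{n\kappa}$ by time $nr$'' into $\sup_t P(e_{n\kappa}\in\bar\xi_t)\ge (e\,nr)^{-1}[v_r(m\kappa)]^j\cdot(\text{const})$, after which $n\to\infty$, then $r\to\infty$, then $m\to\infty$ yields $\beta(\lambda)$ directly, uniformly in $\lambda$. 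If you want to salvage your outline, replace the $\bar u$ comparison by this ball-restriction device.
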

\mn
\textit{Sketch of proof.}
Let $B(x,n)=\{ y\in \TT_\kappa: |y-x|\leq n\}$. Let $v_r(m)$ be the probability that $\xi^o_t$ restricted to $B(o,r)$ infects $e_m$ for some $t\leq r$. Then 
$$\lim_{r\to\infty} v_r(m)=u(m).$$
Now take positive integers $j,k,m,n$ satisfying
$$jm+r\leq n.$$
Let $x_0=e_{(n-jm)\kappa}, x_1=e_{(n-(j-1)m)\kappa}, \dots, x_j=e_{n\kappa}$. In order to have an infection path from $(o,0)$ to $(e_{n\kappa},t)$ for some $t\leq nr$, we consider the intersection of following events: 
\begin{align*}
&\text{ the infection starting from $o$ reaches $x_0$ within time 1 without exiting $S(o)$},\\
&\text{ the infection from $x_0$ reaches $x_1$ within time $r$ without exiting $B(x_1,r)$},\\
& \dots \\
&\text{ the infection from $x_{j-1}$ reaches $x_j$ within time $r$ without exiting $B(x_j,r)$}.
\end{align*} 
Therefore
\begin{align*}
P(e_{n\kappa}\in \bar{\xi}_t \text{ for some }t&\leq nr )\geq P(e_{(n-jm)\kappa}\in \bar{\xi}_t \text{ for some }t\leq 1) [v_r(m\kappa)]^j
\end{align*}
Observe that 
$$P(e_{n\kappa}\in\bar{\xi}_{i+1})\geq e^{-1}P(e_{n\kappa}\in \bar{\xi}_t \text{ for some }t\in[i,i+1]).$$
Hence
\begin{align*}
\sup_t P(e_{n\kappa}\in \bar{\xi}_t)&\geq \max_{0\leq i<nr}P(e_{n\kappa}\in\bar{\xi}_{i+1}) \\
&\geq e^{-1} \max_{0\leq i<nr}P(e_{n\kappa}\in \bar{\xi}_t \text{ for some }t\in[i,i+1])\\
& \geq \frac{e^{-1}}{nr}P(e_{n\kappa}\in \bar{\xi}_t \text{ for some }t\leq nr )\\
&\geq  \frac{e^{-1}}{nr}P(e_{(n-jm)\kappa}\in \bar{\xi}_t \text{ for some }t\leq 1) [v_r(m\kappa)]^j.
\end{align*}
Therefore
$$\liminf_{n\to\infty}\left( \sup_t P(e_{n\kappa}\in \bar{\xi}_{t}) \right)^{\frac{1}{n}}\geq [v_r(m\kappa)]^{1/m}.$$
Taking $r\to\infty$ and then $m\to\infty$ gives the result. \qed

To simplify notation, in this section we will write $\lambda_1$ for $\lambda_1(\TT_\kappa)$ and $\lambda_2$ for $\lambda_2(\TT_\kappa)$. The structure of the periodic trees are similar in a way to the homogeneous trees, which allows us to obtain many useful properties on $\beta(\lambda)$. Here we only list one property to be used later:
\begin{lemma}\label{beta}
\mn
$\left(\frac{\lambda}{1+\lambda}\right)^\kappa\leq \beta(\lambda_1)\leq 1/\gamma$.
\end{lemma}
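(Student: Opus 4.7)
The plan is to handle the two bounds separately. For the lower bound, the observation $\beta(\lambda) = \sup_n [u(n\kappa)]^{1/n} \geq u(\kappa)$ reduces matters to lower bounding $u(\kappa)$. The infection can travel along the path $o = e_0 \to e_1 \to \cdots \to e_\kappa$ event-by-event: each step $e_i \to e_{i+1}$ succeeds if the graphical-representation infection arrow on $\{e_i, e_{i+1}\}$ (rate $\lambda$) fires before the next recovery mark of $e_i$ (rate $1$), contributing a factor $\lambda/(1+\lambda)$. By the strong Markov property applied at each successful infection time, these $\kappa$ events are mutually independent, so $u(\kappa) \geq (\lambda/(1+\lambda))^\kappa$; plugging in $\lambda = \lambda_1$ yields the lower bound.

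For the upper bound I plan to compare to a Galton-Watson process. Let $X_n$ count the descendants of $o$ at level $n\kappa$ that are ever infected by $\xi_t$. Because $n\kappa$ is an exact multiple of the period, each such descendant $x$ has $S(x)\cong S(o)$, and any two descendants $x,y$ at level $n\kappa$ are interchanged by the automorphism of $\TT_\kappa$ that swaps the two isomorphic branches at their lowest common ancestor (which fixes $o$ since the lca lies below $o$). Consequently every level-$n\kappa$ descendant of $o$ has the same ever-infected probability $u(n\kappa)$, and $EX_n = \gamma^n u(n\kappa)$. From each ever-infected $x$, restart an independent contact process $\xi^{(x)}$ on $S(x)$ driven only by the graphical-representation Poisson marks in $S(x)\times[T_x,\infty)$, initialized with $x$ alone infected at time $T_x$ (its first infection time). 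For different $x$'s at level $n\kappa$ these mark sets are disjoint, so the restarts are mutually independent, each distributed as $\bar{\xi}_t$ on $S(o)$, and each pathwise dominated by the true restriction of $\xi_t$ to $S(x)$. Iterating the restart $k$ times produces a Galton-Watson process $Y_k$ with offspring distribution $X_n$ that is pathwise at most the number of ever-infected descendants of $o$ at level $kn\kappa$. If $EX_n = \gamma^n u(n\kappa) > 1$ then $Y_k$ is supercritical, so with positive probability $\xi_t$ produces infinitely many distinct ever-infected vertices, contradicting the fact that (by the periodic analogue of Theorem \ref{cont}, recorded in the remark following Lemma \ref{continuity}) the contact process dies out at $\lambda_1$. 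Hence $\gamma^n u_{\lambda_1}(n\kappa) \leq 1$ for every $n\geq 1$, and passing to $n$-th roots gives $\beta(\lambda_1) = \lim_{n\to\infty}[u(n\kappa)]^{1/n} \leq 1/\gamma$.

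The chief technical obstacle will be making the restart construction rigorous: the subprocesses $\xi^{(x)}$ must simultaneously be realized as i.i.d.\ copies of $\bar{\xi}_t$ \emph{and} as pathwise dominated pieces of the true process, and the iteration must be coordinated so as to produce an honest Galton-Watson bound on the count at level $kn\kappa$. This is arranged via the graphical representation, partitioning the Poisson marks into the disjoint regions $\{S(x)\times[T_x,\infty)\}_x$ and invoking the strong Markov property at the stopping times $T_x$. The symmetry claim $EX_n=\gamma^n u(n\kappa)$ also deserves a careful word, but follows from the transitive action on level-$n\kappa$ descendants of $o$ by tree automorphisms built from sibling-subtree swaps (all sibling subtrees being isomorphic by periodicity).
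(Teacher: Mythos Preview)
Your lower bound is correct and is exactly the paper's argument.

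For the upper bound there is a real gap. You build the generations of $Y_k$ by restarting at the random first-infection times $T_x$ and claim the restarts $\xi^{(x)}$ are i.i.d.\ copies of $\bar\xi$ because the mark regions $S(x)\times[T_x,\infty)$ are spatially disjoint. Spatial disjointness is not enough: the random set $Y_1=\{x:T_x<\infty\}$ itself depends on those same marks. After time $T_x$ the full process can descend into $S(x)$, survive there for a while, return through $x$, and only then reach another level-$n\kappa$ descendant $x'$; hence the event $\{T_{x'}<\infty\}$ may depend on recovery marks at $x$ and on infection marks on edges inside $S(x)$ after $T_x$ --- precisely the randomness driving $\xi^{(x)}$. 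Conditioning on $Y_1$ therefore biases the law of each $\xi^{(x)}$, and $(|Y_k|)_{k\ge 0}$ is not a Galton--Watson process with offspring law $X_n$. (There is also the smaller mismatch that your $X_n$ counts via the unrestricted process $\xi_t$, whereas each restart has the law of the restricted process $\bar\xi_t$.) An attempt to rescue the comparison via FKG goes the wrong way: the events ``$\xi^{(x)}$ has no offspring'' for $x\in Y_1$ are decreasing and hence positively correlated, so independence gives only a \emph{lower} bound on the extinction probability of $(Y_k)$.

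The paper sidesteps this by using Lemma~\ref{utbeta} to obtain a \emph{fixed} time $t_0$ and level $n_0$ with $P(e_{n_0\kappa}\in\bar\xi_{t_0})^{1/n_0}>1/\gamma$, and then taking $B_1=\bar\xi_{t_0}\cap\LL_{n_0\kappa}$. Now $E|B_1|\ge(a\gamma)^{n_0}>1$, and restarting each $x\in B_1$ at the deterministic time $t_0$ uses only marks in $S(x)\times[t_0,2t_0]$, disjoint in time from the marks that determined $B_1$; this yields an honest supercritical branching process dominated by the contact process, so $\beta(\lambda)>1/\gamma$ forces survival at $\lambda$ and hence $\beta(\lambda_1)\le 1/\gamma$. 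Your route can in principle be repaired (for instance by ``freezing'' each level-$n\kappa$ vertex the moment it is first infected, so the parent process never again uses marks in $S(x)$), but then the offspring mean is no longer $\gamma^n u(n\kappa)$ and you would need a separate argument relating the new mean to $\beta(\lambda)$ --- which is essentially the content of Lemma~\ref{utbeta}.
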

\begin{proof}
When the vertex $e_i$ is infected, the probability that it infects $e_{i+1}$ before the infection recovers is $\lambda/(1+\lambda)$. If this event occurs for every vertex on the path from $e_0$ to $e_{n\kappa}$ then $e_{n\kappa}\in \xi^o_t$ for some $t$. That is,
$$u(n\kappa)=P( e_n\in \xi_t \text{ for some }t)\geq \left(\frac{\lambda}{1+\lambda}\right)^{n\kappa},$$
which implies that $\beta(\lambda)\geq \left(\frac{\lambda}{1+\lambda}\right)^\kappa$.

Suppose $\beta(\lambda)>1/\gamma$. There exists some constant $a$ so that $\beta(\lambda)> a>1/\gamma$. By Lemma \ref{utbeta} there exists $t_0$ and $n_0$ such that 
$$P(e_{n_0\kappa}\in \bar{\xi}_{t_0})^{1/n_0}\geq a.$$
For $n\in\ZZ$, we will use
$$\LL_{n}=\{ x\in \TT_\kappa: \ell(x)=n\}$$
to denote the set of vertices on level $n$. 
Let $B_1=\{ x\in \bar{\xi}_{t_0}: x\in \LL_{n_0\kappa}\}$. Then $E|B_1|\geq (a\gamma)^{n_0}>1$. At time $t_0$, at each $x\in B_1$ we can start an independent contact process restricted to $S(x)$ and run them for time $t_0$. Continuing this construction gives a supercritical branching process. Therefore $\beta(\lambda)>1/\gamma$ implies $\lambda>\lambda_1$, i.e., $\beta(\lambda_1)\leq 1/\gamma$. 

\end{proof}

\subsection{Proof of Theorem \ref{intphase}}
The above discussion completes our preparation for the proof. Now we will proceed to show the existence of an intermediate phase for the contact process on periodic trees. 

There are $\kappa$ types of vertices in the periodic tree $T_\kappa$ categorized according to their offspring numbers and relative positions in the tree. A vertex that corresponds to $a_{i+1}$ in the tree $T_\kappa=(a_1, a_2, \dots, a_\kappa)$ is said to be of type $i$, where $i\in\{0,1,\dots,\kappa-1\}$. Let $\xi^{i,\lambda}_t$ be the contact process on $T_\kappa$ with infection rate $\lambda$, starting with an infected type $i$ vertex that we denote by $x_i$. We will also set $x_i$ to be the root on level 0 and define the function $\ell: T_{\kappa}\to \ZZ$ accordingly. 

If we only look at levels $\{\LL_{n\kappa}\}_{n\in\ZZ}$ of the tree $\TT_{\kappa}$ then the structure resembles a homogeneous tree $\TT^{\gamma}$. Here we state a Lemma in Liggett \cite{Liggett} that will be useful in our proof.

\mn \textbf{Lemma 4.26. (See \cite{Liggett} Part I)}
\textit{ Let $T^d$ denote the homogeneous tree with degree $d+1$ and root $o$. Let 
$$\alpha_n(\rho)=\sum_{|x-o|=n} \rho^{\ell(x)}.$$
Then $\alpha_0(\rho)=1$ and for $n\geq 1$
$$\alpha_{n}(\rho)=
\begin{cases}
\frac{d^{n-1}\rho^n(d^2\rho^2-1)+\rho^{-n}(\rho^2-1)}{d\rho^2-1}, &d\rho^2\neq 1\\
\rho^{-n}[ (n+1)-(n-1)\rho^{2}], & d\rho^2=1.
\end{cases}
$$
}

\begin{lemma}\label{weight}
For any $\varep>0$, there exists $t_0>0$ so that 
$$\max_{i\in \{0,1,\dots,\kappa-1\}} E w_\rho(\xi^{i,\lambda_1}_{t_0})\leq \varep.$$
\end{lemma}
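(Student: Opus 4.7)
The plan is to combine two facts: (i) the contact process on periodic trees dies out at $\lambda_1(\TT_\kappa)$, which is the analog of Theorem~\ref{cont} for $\TT_\kappa$ (noted in the remark following Lemma~\ref{continuity}), and (ii) the bound $\beta(\lambda_1) \le 1/\gamma$ from Lemma~\ref{beta}. The former gives pointwise convergence $\xi^{i,\lambda_1}_t \to \varnothing$ almost surely, while the latter supplies the integrable majorant needed for a dominated-convergence argument. Taking a maximum over the finite index set $\{0,1,\dots,\kappa-1\}$ will then give a uniform $t_0$.

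The key step is to show that, for an appropriate constant $\rho > 0$, the expected weight of the set of ever-infected vertices $A_\infty^i := \bigcup_{t \ge 0} \xi^{i,\lambda_1}_t$ is finite. Writing
$$Ew_\rho(A_\infty^i) = \sum_{x \in \TT_\kappa} \rho^{\ell(x)} P(x \in A_\infty^i),$$
I partition vertices by their position relative to $x_i$: (a) descendants of $x_i$; (b) ancestors of $x_i$; (c) vertices in the side-branches hanging off ancestors. For (a), by the symmetry of $\TT_\kappa$ every vertex $x$ at level $n\kappa$ in the subtree has $P(x \in A_\infty^i) = u(n\kappa)$, and there are exactly $\gamma^n$ such vertices. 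Since $u(n\kappa) \le \beta(\lambda_1)^n \le \gamma^{-n}$ by Lemma~\ref{beta}, the descendant contribution is dominated by $\sum_n C \rho^{n\kappa}$, finite for any $\rho < 1$; the intermediate levels $n\kappa + k$ with $0 < k < \kappa$ are handled the same way up to constants. For (b) and (c), I would derive an upward analog of the subadditive relation~(\ref{subadd}) by the same strong-Markov argument, obtaining a growth rate $\bar\beta(\lambda_1)$ controlling the probability of infecting an ancestor; extinction at $\lambda_1$ forces $\bar\beta(\lambda_1) < 1$, which yields summability in these classes provided $\rho$ is chosen in an appropriate range.

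Once $Ew_\rho(A_\infty^i) < \infty$, the rest is immediate. By extinction, $\xi^{i,\lambda_1}_t = \varnothing$ for all sufficiently large $t$ a.s., so $w_\rho(\xi^{i,\lambda_1}_t) \to 0$ a.s. Combined with $\xi^{i,\lambda_1}_t \subseteq A_\infty^i$ and the dominated convergence theorem, this gives $Ew_\rho(\xi^{i,\lambda_1}_t) \to 0$ as $t \to \infty$. For each $i$, choose $t_0^i$ with $Ew_\rho(\xi^{i,\lambda_1}_{t_0^i}) \le \varepsilon$; setting $t_0 = \max_i t_0^i$ and invoking the convergence once more at this common time yields the claimed uniform bound.

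The main obstacle is the estimate in the middle paragraph, specifically bounding the contribution from ancestors of $x_i$ and their side branches: the weight $\rho^{\ell(x)}$ is large when $\ell(x) < 0$, so the descending bound $\beta(\lambda_1) \le 1/\gamma$ by itself does not suffice. One must set up an upward analog of the growth rate, verify that it is strictly less than $1$ at $\lambda_1$ (using extinction on $\TT_\kappa$, which subsumes extinction of the infection on the ancestral ray), and then pick $\rho$ in the (possibly narrow) intersection of the admissible ranges for both directions.
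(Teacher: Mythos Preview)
Your overall strategy—dominate $w_\rho(\xi^{i,\lambda_1}_t)$ by the integrable majorant $w_\rho(A^i_\infty)$ and apply dominated convergence using extinction at $\lambda_1$—is sound in outline and close in spirit to the paper's argument. The real work, as you correctly identify, is proving $Ew_\rho(A^i_\infty)<\infty$, and that is where your sketch breaks down.

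The genuine gap is the assertion that ``extinction at $\lambda_1$ forces $\bar\beta(\lambda_1)<1$.'' Almost-sure extinction does not by itself yield exponential decay of the probability of ever hitting a distant vertex; at criticality such hitting probabilities can decay only polynomially, giving exponent $1$. Nothing in your argument rules this out for the upward direction. Moreover, hitting probabilities for the contact process are \emph{not} symmetric in the endpoints (a short computation on a path of three vertices already shows $P(\exists t:\,y\in\xi^x_t)\neq P(\exists t:\,x\in\xi^y_t)$), so you cannot simply transfer the bound $\beta(\lambda_1)\le 1/\gamma$ from Lemma~\ref{beta} to $\bar\beta$. Even granting a bound on $\bar\beta$, the side-branch contribution—up $a$ blocks and down $b$ blocks, with weight $\rho^{(b-a)\kappa}$ and roughly $\gamma^{b}$ vertices—still requires a careful balance, and you have not identified the range of $\rho$ for which all three classes are simultaneously summable.

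The paper avoids the up/down dichotomy altogether. It groups vertices by \emph{graph distance} into spheres $\mathcal{B}_n(x_i)$ and uses a single bound $P(x\text{ ever infected})\le\beta(\lambda_1)^{n-1}$ for $x\in\mathcal{B}_n(x_i)$. The decisive trick is to \emph{pin down} $\rho$ by $\gamma\rho^{2\kappa}=1$: Liggett's closed formula (Lemma~4.26 in \cite{Liggett}) then gives $\alpha_n(\rho)=\sum_{x\in\mathcal{B}_n(x_i)}\rho^{\ell(x)}\sim n\rho^{-n\kappa}=n\gamma^{n/2}$, and multiplying by $\beta(\lambda_1)^{n}\le\gamma^{-n}$ yields the summable tail $\sum_n n\gamma^{-n/2}$. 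With this in hand the paper splits $Ew_\rho(\xi^{i,\lambda_1}_t)$ into a far part (distance $\ge N\kappa$, bounded by $\varepsilon/2$ uniformly in $t$) and a near part (finitely many vertices, tending to $0$ by extinction). This specific choice of $\rho$, which makes the weighted sphere grow only polynomially and thereby handles all directions at once, is the idea your proposal is missing.
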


\begin{proof}
We will consider the contact process $\xi^{i,\lambda_1}_t$ starting with a type $i$ vertex $x_i$ infected and set $x_i$ to be the root of $T_\kappa$. Recall that $\LL_{n}=\{ x\in \TT_\kappa: \ell(x)=n\}$ denotes the set of vertices on level $n$ and $S(x)$ denotes the subtree that contains vertex $x$ and all of its descendants. For a vertex $x\in\cup_{n\in \ZZ}\LL_{n\kappa}$ we shall define a block with root $x$ to be
$$S^x_\kappa =\{ y\in S(x): |y-x| \leq \kappa-1\}.$$
Note that every vertex in $\cup_{n\in \ZZ}\LL_{n\kappa}$ has type $i$. Define $\mathcal{D}_n(x_i)=\{ x\in T_{\kappa}: |x-x_i|= n\}$ and 
$$\mathcal{B}_n(x_i)=\{ x\in T_\kappa : |x-x_i|=n\kappa, \ell(x)\mod\kappa =0\}.$$
The tree $T_\kappa$ can be partitioned into a disjoint union of blocks 
$$T_\kappa=\cup_{x\in \cup_{m\in\ZZ}\mathcal{L}_{m\kappa}} S^x_\kappa.$$
The weight $Ew_\rho(\xi^{i,\lambda_1}_t)$ can be split into two parts:
\beq\label{twopart}
Ew_\rho(\xi^{i,\lambda_1}_t)
\leq  \sum_{n=0}^{\kappa N-1}\sum_{x\in \mathcal{D}_{n}(x_i)} w_\rho(x)P(x\in \xi^{i,\lambda_1}_{t})+\sum_{n= \kappa N}^{\infty}\sum_{x\in \mathcal{D}_{n}(x_i)} w_\rho(x)P(x\in \xi^{i,\lambda_1}_{t})
\eeq
To obtain an upper bound on the second term in (\ref{twopart}), we count the weight of the whole block $S^x_\kappa$ if some vertex in this block is infected at time $t$. Hence
\beq\label{block}
\sum_{n= \kappa N}^{\infty}\sum_{x\in \mathcal{D}_{n}(x_i)} w_\rho(x)P(x\in \xi^{i,\lambda_1}_{t})
\leq \sum_{n\geq N}\sum_{x\in \mathcal{B}_n(x_i)} w_\rho(S^x_\kappa) P( S^x_\kappa \cap \xi^{i,\lambda_1}_{t}\neq \varnothing)
\eeq
Set $a_0=1$ and it is straightforward to compute
\beq\label{first}
w_\rho(S^x_\kappa)=\sum_{m=0}^{\kappa-1}\left( \rho^{\ell(x)+m}\cdot \prod_{j=0}^m a_j \right) \leq \gamma \rho^{\ell(x)} \left(\sum_{m=0}^{\kappa-1} \rho^m \right) \leq \frac{\gamma}{1-\rho} \cdot \rho^{\ell(x)}.
\eeq
Let $\Gamma_{x_i,x}$ denote the path on $T_\kappa$ from $x_i$ to $x$. If $\Gamma_{x_i,x}\cap S^x_\kappa\neq\{x\}$, then the first vertex in $S^x_{\kappa}$ that will be reached by the infection is not $x$.  In this case, let $x^*\in \Gamma_{x_i,x}$ denote the vertex that satisfies $|x-x^*|=\kappa$. That is, the infection starting at $x_i$ would spread to $S^x_\kappa$ through vertex $x^*\in S^x_\kappa$. Then we know
$$P(S^x_\kappa \cap \xi^{i,\lambda_1}_t \neq \varnothing)\leq
\begin{cases}
P(x \in \xi^{i,\lambda_1}_t \text{ for some }t) & \text{ if $\Gamma_{x_i,x}\cap S^x_\kappa=\{x\}$,}\\
P(x^* \in \xi^{i,\lambda_1}_t \text{ for some }t) & \text{ otherwise.}
\end{cases}
$$
Therefore, for $x\in \mathcal{B}^i_n$,
\beq\label{second}
P(S^x_\kappa \cap \xi^{i,\lambda_1}_t \neq \varnothing)\leq \beta(\lambda_1)^{|x_i-x|-1}=\beta(\lambda_1)^{n-1}.
\eeq
Putting (\ref{first}) and (\ref{second}) together we have 
\beq\label{twopart2}
(\ref{block})\leq \sum_{n\geq N}\sum_{x\in \mathcal{B}_n(x_i)}  \frac{\gamma}{1-\rho}\cdot \rho^{\ell(x)} \beta(\lambda_1)^{n-1}.
\eeq
Define $\alpha_n(\rho)=\sum_{x\in \mathcal{B}_n(x_i)}  \rho^{\ell(x)}$. As a simple corollary of Lemma 4.26 we have $\alpha_0(\rho)=1$ and
\beq\label{an}
\alpha_{n}(\rho)=\rho^{-n\kappa}[ (n+1)-(n-1)\rho^{2\kappa}] \quad\text{ when } \gamma\rho^{2\kappa}=1.
\eeq
It follows from (\ref{an}) and $\beta(\lambda_1)\leq 1/\gamma$ (see Lemma \ref{beta}) that we can choose $N$ sufficiently large so that 
\beq\label{part1}
\sum_{n\geq N}\sum_{x\in \mathcal{B}_n(x_i)}  \frac{\gamma}{1-\rho}\cdot \rho^{\ell(x)} \beta(\lambda_1)^{n-1}
\leq \frac{\gamma}{\beta(\lambda_1)(1-\rho)}  \left(\sum_{n\geq N} \gamma^{-n/2}(n+1)\right)\leq \varep/2.
\eeq
Notice that this choice of $N$ is independent of the type of vertex that is initially infected as well as the time $t$. When $N$ is fixed, since at $\lambda_1$ the contact process dies out
$$\lim_{t\to\infty} \sum_{n=0}^{\kappa N-1}\sum_{x\in \mathcal{D}^i_{n}} w_\rho(x)P(x\in \xi^{i,\lambda_1}_{t}) =0$$
for any $i\in \{0,1,\dots, \kappa-1\}$. That is, there exists $M_i>0$ so that when $t\geq M_i$,
\beq\label{part2}
\sum_{n=0}^{\kappa N-1}\sum_{x\in \mathcal{D}^i_{n}} w_\rho(x)P(x\in \xi^{i,\lambda_1}_{t})\leq \varep/2.
\eeq
Taking $t_0=\max\{M_0, M_1, \dots, M_{\kappa-1}\}$ and adding up (\ref{part1}) and (\ref{part2}) proves the desired result.
\end{proof}

\mn
\textit{Proof of Theorem \ref{intphase}.} 
Given $\varep<1/4$, by Lemma \ref{weight} we can choose $t_0$ so that 
$$\max_{i\in\{0,1,...,\kappa-1\}}E_\lambda w_\rho(\xi^{i,\lambda_1}_{t_0})\leq \varep.$$
Since $\max_{i\in\{0,1,...,\kappa-1\}}E_\lambda w_\rho(\xi^{i,\lambda}_{t_0})$ is a continuous function with respect to $\lambda$, there exists $\lambda>\lambda_1$ so that 
$$\max_{i\in\{0,1,...,\kappa-1\}}E_\lambda w_\rho(\xi^{i,\lambda}_{t_0})<2\varep\equiv \delta.$$
Without loss of generality we consider the contact process $\xi^{0,\lambda}_t$ that starts with a type 0 vertex infected. Let $\FF_t$ be the $\sigma$-algebra generated by $\xi^{0,\lambda}_t$ up to time $t$. We can show that
\begin{align*}
E[w_\rho(\xi^{0,\lambda}_{(n+1)t_0})|\FF_{nt_0}]&\leq \sum_{x\in \xi^{0,\lambda}_{nt_0}} \rho^{\ell(x)} E \sum_{y\in \xi^{x,\lambda}_{t_0}} \rho^{\ell(y)-\ell(x)}\\
&\leq \left(\sum_{x\in \xi^{0,\lambda}_{nt_0}} \rho^{\ell(x)}\right) \cdot \max_{i\in\{0,1,...,\kappa-1\}}E w_\rho(\xi^{i,\lambda}_{t_0})\leq \delta \cdot w_\rho(\xi^{0,\lambda}_{nt_0}).
\end{align*}
It follows that 
$$M_n:=\frac{w_\rho(\xi^{0,\lambda}_{nt_0})}{\delta^n}$$
is a non-negative supermartingale, which converges almost surely as $n$ goes to infinity. Since $\delta=2\varep<1/2$,  
$$w_\rho(\xi^{0,\lambda}_{nt_0})\to 0 \quad\quad \text{ as }n\to\infty.$$
Intuitively this implies that the process $\xi^{0,\lambda}_t$ (where $\lambda>\lambda_1$) does not survive strongly. See Proposition 1.0 in \cite{Stacey} for a complete argument. Therefore $\lambda_1<\lambda\leq \lambda_2$.\qed

\subsection{Weak survival at $\lambda_2$}\label{pweak}
In this section we complete the proof of Theorem \ref{pgrowth} by showing the contact process on $\TT_\kappa$ does not survive strongly at $\lambda_2$. Since $\lambda_1(\TT_\kappa)<\lambda_2(\TT_\kappa)$ we know the process survives weakly at $\lambda_2$. The proof on periodic trees is very much similar to that of Galton-Watson trees. In this case, the estimations are simpler due to the more regular structure that comes from periodicity.

Without loss of generality, a vertex with offspring number $a_1$ is chosen to be the root $o$. Let $\bar{\xi}_t$ denote the contact process restricted to the subtree $S(o)$ starting with the root $o$ infected.

\mn\textit{Proof of Theorem \ref{pgrowth} (ii).} Our goal is to show that $\rho\equiv P(o\in \bar{\xi}^{\lambda}_t \text{ i.o.})>0$ implies $\lambda>\lambda_2$. 

Suppose when the infection rate is $\lambda$ we have $\rho>0$. First observe that 
$$\rho=P(o \in \bar{\xi}^{\lambda}_t \text{ for a sequence of times }t\uparrow \infty)\leq u(n)$$
for all $n$ and $\lambda>0$. (The reader is referred to Theorem 4.65 in \cite{Liggett} for more details.) Hence 
$\beta(\lambda)=\lim_{n\to\infty} u(n)^{1/n}\geq \lim_{n\to\infty}\rho^{1/n}=1.$
Lemma \ref{utbeta} then implies that for any $\delta>0$ there exists $n_0$ and $t_0$ so that 
$$P(e_{n_0\kappa}\in \bar{\xi}^{\lambda}_{t_0})\geq (1-\delta)^{n_0}.$$
As $P(e_{n_0\kappa}\in \bar{\xi}^{\lambda}_{t_0})$ is continuous with respect to $\lambda$, there exists $\varep>0$ so that 
$$P(e_{n_0\kappa}\in \bar{\xi}^{\lambda-\varep}_{t_0})\geq (1-2\delta)^{n_0}.$$

Now we will estimate the size of the infection by comparing with a branching process. Define $\mathcal{L}_{n}=\{ x\in S(o): |x-o|=n\kappa\}$. Let $B_0=\{o\}$ and  $B_1=\{ x\in \bar{\xi}^{\lambda-\varep}_{t_0}: x\in \mathcal{L}_{n_0}\}$. Then $E|B_1|\geq (\gamma(1-2\delta))^{n_0}>1$ when we choose $\delta$ to be sufficiently small. Specifically we will choose $\delta$ so that $\gamma(1-2\delta)^2>1$ for reasons that will become clear later.  

At time $t_0$, at each $x\in B_1$ we can start an independent contact process restricted to $S(x)$ and run it for time $t_0$. Continuing this construction gives a supercritical branching process $|B_m|$ with mean $E|B_1|$. It is clear from the construction that $B_m\subset \bar{\xi}^{\lambda-\varep}_{mt_0}\cap \LL_{mn_0}$. Since $|B_m|$ is a supercritical branching process,  
$$\lim_{m\to\infty} \frac{|B_m|}{ (E|B_1|)^m}$$
exists almost surely and is not trivially zero. Therefore, there exists a $\epsilon>0$ such that 
\beq\label{bsize}
P(|B_m|\geq \epsilon(E|B_1|)^m)\geq \epsilon
\eeq
for all sufficiently large $m$.
For reasons that will become clear later we choose $m_0$ sufficiently large so that 
$$P(|\bar{\xi}^{\lambda-\varep}_{m_0t_0}\cap \LL_{m_0n_0}|\geq \epsilon(\gamma(1-2\delta))^{m_0n_0})\geq P(|B_{m_0}|\geq \epsilon(\gamma(1-2\delta))^{m_0n_0})\geq \epsilon$$
and
$$\epsilon^2 ((1-2\delta)^2\gamma)^{m_0n_0}>1.$$
Let $r_i=P(o\in\bar{\xi}^{\lambda-\varep}_{2im_0t_0})$ and $M\equiv \epsilon(\gamma(1-2\delta))^{m_0}$. We can obtain the following recursive relation:
\begin{align}\label{recursive}
\nonumber r_{i+1}&\geq P( Binomial(|\bar{\xi}^{\lambda-\varep}_{m_0t_0}\cap \mathcal{L}_{m_0n_0}|, r_i)\geq 1)P(e_{m_0n_0\kappa}\in \bar{\xi}^{\lambda-\varep}_{m_0t_0})\\
&\geq P(|\bar{\xi}^{\lambda-\varep}_{m_0t_0}\cap \mathcal{L}_{m_0n_0}|\geq M)(1-(1-r_i)^M)P(e_{n_0\kappa}\in \bar{\xi}^{\lambda-\varep}_{t_0})^{m_0}\\
&\geq \epsilon (1-2\delta)^{m_0n_0}(1-(1-r_i)^M).
\end{align}
Setting $f(r)=\epsilon (1-2\delta)^{m_0n_0}(1-(1-r)^M)$. By our choices of $\delta$ and $m_0$,
$$f'(0)=\epsilon^2 ((1-2\delta)^2\gamma)^{m_0n_0}>1.$$
Following the same argument as in the proof of Theorem \ref{lambda2} we can show there exists $r^*>0$ such that $r_i\geq r^*$ for all $i\geq 0$. That is, $\bar{\xi}^{\lambda-\varep}_t$ survives strongly, which implies $\lambda>\lambda-\varep\geq \lambda_2$. \qed

\end{document}